\documentclass[a4paper,11pt,oneside]{amsart}
\usepackage{thomas}
\usepackage{xnotes}
\hypersetup{hidelinks,final,unicode}

\title{Coarse injectivity, hierarchical hyperbolicity, and semihyperbolicity}

\author{Thomas Haettel}
\address{Institut Montpelli\'{e}rain Alexander Grothendieck, CNRS, Univ. Montpellier. \\
Place Eugène Bataillon, 34090 Montpellier, France}
\email{thomas.haettel@umontpellier.fr}

\author{Nima Hoda}
\address{DMA, École normale supérieure \\ Université
  PSL, CNRS \\ 75005 Paris, France}
\email{nima.hoda@mail.mcgill.ca}

\author{Harry Petyt}
\address{School of Mathematics, University of Bristol, UK}
\email{h.petyt@bristol.ac.uk}

\date{\today}

\keywords{hierarchically hyperbolic,
    coarsely injective,
    strongly shortcut,
    semihyperbolic,
    hierarchically quasiconvex,
    bounded packing}

\makeatletter
\@namedef{subjclassname@2020}{\textup{2020} Mathematics Subject Classification}
\makeatother

\subjclass[2020]{20F65, 
  20F67, 
  51F30} 

\begin{document}

\begin{abstract}
    We relate three classes of nonpositively curved metric spaces: hierarchically hyperbolic spaces, coarsely injective spaces, and strongly shortcut spaces.  We show that every hierarchically hyperbolic space admits a new metric that is coarsely injective.  The new metric is quasi-isometric to the original metric and is preserved under automorphisms of the hierarchically hyperbolic space. We show that every coarsely injective metric space of uniformly bounded geometry is strongly shortcut.  Consequently, hierarchically hyperbolic groups---including mapping class groups of surfaces---are coarsely injective and coarsely injective groups are strongly shortcut.  
    
    Using these results, we deduce several important properties of hierarchically hyperbolic groups, including that they are semihyperbolic, have solvable conjugacy problem, have finitely many conjugacy classes of finite subgroups, and that their finitely generated abelian subgroups are undistorted.  Along the way we show that hierarchically quasiconvex subgroups of hierarchically hyperbolic groups have bounded packing.
\end{abstract}

\maketitle

\section{Introduction}

A principal theme of geometric group theory is the study of groups as metric spaces.  This includes studying groups via the types of metric spaces they act on.  In this vein, the study of groups acting on spaces satisfying various forms of nonpositive curvature conditions has been especially fruitful.  In this article, we are concerned with three classes of spaces exhibiting nonpositive curvature: hierarchically hyperbolic spaces, coarsely injective spaces, and strongly shortcut spaces.

\subsection{The setting} ~

The first of our three classes is that of \emph{hierarchically hyperbolic spaces}, which were introduced by Behrstock, Hagen, and Sisto in \cite{behrstockhagensisto:hierarchically:1}. These spaces exhibit hyperbolic-like behaviour, and there is a growing body of interesting examples, including many quotients of mapping class groups \cite{behrstockhagenmartinsisto:combinatorial} and all known cubical groups \cite{hagensusse:onhierarchical}, amongst others. The theory has had a number of successes, such as proving Farb's quasiflats conjecture for mapping class groups \cite{hhs_quasiflats} and establishing uniform exponential growth for many cubical groups \cite{abbottngspriano:hierarchically}.  We postpone describing the hierarchy structure until Section~\ref{subsection:hhsbackground}.

\mk

The next class we consider is that of coarsely injective spaces. A metric space is said to be \emph{coarsely injective} if there is
a constant $\delta$ such that for any family $\{B(x_i,r_i):i\in I\}$
of balls with $d(x_i,x_j)\leq r_i+r_j$ for all $i,j\in I$, the
$\delta$--neighbourhoods of those balls have nonempty total
intersection. This property was first considered by Chepoi and Estellon in
\cite{chepoiestellon:packing}.

As the term suggests, this notion is closely related to that of \emph{injective} metric spaces. A metric space is injective (also called \emph{hyperconvex}) if for any
family $\{B(x_i,r_i):i\in I\}$ of balls with
$d(x_i,x_j)\leq r_i+r_j$ for all $i,j\in I$, the balls have
nonempty total intersection. In other words, it amounts to taking $\delta=0$ in the definition of coarse injectivity. (There are multiple equivalent ways to define injectivity of a metric space, by a theorem of Aronszajn and Panitchpakdi
\cite{aronszajnpanitchpakdi:extension}.) A construction of Isbell
\cite{isbell:six}, which was later rediscovered by Dress
\cite{dress:trees} and by Chrobak and Larmore
\cite{chrobaklarmore:generosity}, shows that every metric space has an
essentially unique \emph{injective hull}. More precisely, the
injective hull of a metric space $X$ is an injective metric space
$E(X)$, together with an isometric embedding $e:X\to E(X)$, such that
no injective proper subspace of $E(X)$ contains $e(X)$.  For convenience, we will often identify $X$ with its image $e(X)$.  A nice
description of the construction of the injective hull is given by Lang in
\cite[\S3]{lang:injective}. 

The classes of coarsely injective spaces and injective spaces are tied together by the following useful fact, the proof of which is identical to that of \cite[Proposition~3.12]{chalopinchepoigenevoishiraiosajda:helly}. A subset $Y$ of a metric space $X$ is \emph{coarsely dense} if there exists $r$ such that every $x\in X$ is $r$--close to some $y\in Y$.
\bpro \label{prop:dense_in_hull}
A metric space is coarsely injective if and only if it is coarsely dense in its injective hull.
\epro
Moreover, if a group acts properly and coboundedly on a coarsely injective space, then it acts properly and coboundedly on the injective hull of that space (see Lemma~\ref{lem:modifiedccgho}).  Here and throughout the paper, a group $G$ is said to act \emph{properly} on a metric space $X$ if $\{g \in G : gB \cap B \neq \emptyset \}$ is finite for every metric ball $B$ of $X$.  This is sometimes referred to as a \emph{metrically proper} action.

Injective metric spaces satisfy a number of properties reminiscent of nonpositive curvature, and in particular of CAT(0) spaces. For instance, they admit a conical geodesic bicombing \cite{lang:injective}, and proper injective spaces of finite combinatorial dimension have a canonical convex such bicombing \cite{descombeslang:convex}. Also, every bounded group action on an injective metric space has a fixed point, and the fixed point set is itself injective \cite{lang:injective}. These properties are what allow us to draw our conclusions for hierarchically hyperbolic groups. Although it will not be needed here, it is interesting to note that injective spaces are also complete \cite{aronszajnpanitchpakdi:extension} and contractible \cite{isbell:six}.

\mk

The strong shortcut property was introduced by the second named author
for graphs \cite{Hoda:shortcut_graphs} and then generalized to roughly
geodesic metric spaces \cite{Hoda:shortcut_spaces}.  A \emph{Riemannian circle} $S$ is the circle $S^1$ endowed with a geodesic metric of some length $|S|$.  A roughly geodesic metric space $(X,\sigma)$ is \emph{strongly shortcut} if there exists $K > 1$ such that for any $C > 0$ there is a bound on the lengths $|S|$ of $(K,C)$--quasi-isometric embeddings $S \to X$ of Riemannian circles $S$ in $(X,\sigma)$.  A group is \emph{strongly shortcut} if it acts properly and coboundedly on a strongly shortcut metric space.  Many spaces and
graphs of interest in geometric group theory and metric graph theory
are strongly shortcut, including Gromov-hyperbolic spaces, $1$--skeletons of finite-dimensional
$\CAT(0)$ cube complexes, Cayley graphs of Coxeter groups, and asymptotically $\CAT(0)$ spaces.  Despite being such a unifying notion, it
remains possible to draw conclusions about strongly shortcut groups, including that they are finitely presented and have polynomial isoperimetric function, and so have decidable word problem.

\subsection{Comparison of the classes} ~

Our main result is the definition of a new metric on hierarchically hyperbolic spaces, and more generally on coarse median spaces satisfying a nice approximation property of median intervals by CAT(0) cube complexes.

Our construction is directly inspired by work of Bowditch (see~\cite{bowditch_median_injective}), in which he constructs an injective metric on any finite rank metric median space. Indeed, if one endows a finite-dimensional CAT(0) cube complex with the piecewise $\ell^\infty$ metric, it becomes an injective metric space.  The new metric we construct is weakly roughly geodesic and has the property that balls are coarsely median-convex; see Theorem~\ref{thm:properties_of_sigma}.

We then prove a hierarchical generalisation of a very nice result of Chepoi, Dragan, and Vax\`es \cite{chepoidraganvaxes:core} about pairwise close subsets of hyperbolic spaces. Combining this with work of Russell, Spriano, and Tran \cite{hhs_convexity} enables us to deduce a coarse Helly property for balls.

\bmthm[Proposition~\ref{prop:bilipschitzmetrics}, Corollary~\ref{cor:hhscoarselyhelly}] \label{thm:hhs_coarsely_helly}
Let $(X,\s)$ be a hierarchically hyperbolic space with metric $d$. There exists a metric $\sigma$ on $X$ such that $(X,\sigma)$ is coarsely injective and quasi-isometric to $(X,d)$. Moreover, $\sigma$ is invariant with respect to the automorphism group of $(X,\s)$.
\emthm

Our second result relates the class of coarsely injective spaces to that of strongly shortcut spaces.  A metric space has \emph{uniformly bounded geometry} if, for any $r > 0$, there exists a uniform $N(r) \in \N$ such that every ball of radius $r$ contains at most $N(r)$ points.

\bmthm[Theorem~\ref{thm:coarse_helly_implies_shortcut}] \label{mthm:coarsely_helly_shortcut}
Every coarsely injective metric space of uniformly bounded geometry is strongly shortcut.
\emthm

Huang and Osajda proved that weak Garside groups of finite type and Artin groups of $FC$-type are Helly \cite{huang:garside}, so we have the following corollary of Theorem~\ref{mthm:coarsely_helly_shortcut}.

\bmcor
    Weak Garside groups of finite type and Artin groups of $FC$-type are strongly shortcut.
\emcor

Combining Theorem~\ref{thm:hhs_coarsely_helly} with Theorem~\ref{mthm:coarsely_helly_shortcut}, we deduce the following.

\bmcor
Every hierarchically hyperbolic space admits a roughly geodesic metric in its quasi-isometry class that satisfies the strong shortcut property.
\emcor

In fact, in the case of hierarchically hyperbolic \emph{groups}, the metric we construct is equivariant, by the ``moreover'' statement of Theorem~\ref{thm:hhs_coarsely_helly} (also see Remark~\ref{remark:HHS_aut_is_enough}). We therefore have that every hierarchically hyperbolic group acts properly cocompactly on a coarsely injective space, and any group admitting such an action is a strongly shortcut group. Moreover, these three classes can be distinguished. Indeed, the $(3,3,3)$ Coxeter triangle group is strongly shortcut but not coarsely injective \cite{hoda:crystallographic}; and type-preserving uniform lattices in thick buildings of type $C_n$ are coarsely injective \cite{chalopinchepoigenevoishiraiosajda:helly}, but they cannot be hierarchically hyperbolic groups, because they do not admit any nonelementary actions on hyperbolic spaces \cite{haettel:hyperbolic_rigidity}.

\mk

One can also ask how \emph{Helly groups}, as defined in \cite{chalopinchepoigenevoishiraiosajda:helly}, fit into this framework. A \emph{Helly graph} is a locally finite graph in which any set of pairwise intersecting balls in the vertex set have nonempty total intersection, and a group is Helly if it acts properly cocompactly on a Helly graph. Helly groups have some strong properties, including biautomaticity \cite[Theorem~1.5]{chalopinchepoigenevoishiraiosajda:helly}. Hughes and Valiunas have constructed a hierarchically hyperbolic group that is not biautomatic and hence not Helly, though it is CAT(0) and acts properly cocompactly on an injective metric space \cite{hughesvaliunas:commensurating}.

It is clear that every Helly group is coarsely injective. Recall that according to Bridson (see~\cite{bridson:semisimple}), mapping class groups are not CAT(0). Note that any Helly group acts properly cocompactly on a space with a convex geodesic bicombing (see~\cite{descombeslang:convex}). So we suspect that mapping class groups are not Helly groups.

We can summarise the relations between these classes with the following diagram, in which $A \Rightarrow B$ denotes the statement ``any group that is $A$ is necessarily $B$'', and $A \not\Rightarrow B$ denotes ``there is an example of a group that is $A$ but not $B$''.

\[\begin{tikzcd}[row sep = tiny]
\text{HHG } \arrow[dr, shift left, Rightarrow] \arrow[dd, shift left, shift left, Rightarrow, "/" marking] & & \\ 
    & \hspace{3mm}\text{Coarsely injective} \arrow[r, shift left, shift left, Rightarrow] \arrow[ul, shift left, shift left, Rightarrow, "/" marking] \arrow[dl, shift left, Rightarrow, "/" marking]
        & \text{Strongly shortcut} \arrow[l, shift left, shift left, Rightarrow, "/" marking]\\ 
    \text{Helly } \arrow[uu, shift left, shift left, Rightarrow, "/" marking] \arrow[ur, shift left, shift left, Rightarrow] & &
\end{tikzcd}\]

\subsection{Metric consequences} \label{subsection:metric_consequences}

We now describe some of the consequences of Theorem~\ref{thm:hhs_coarsely_helly} for hierarchically hyperbolic spaces. Recall that a quasigeodesic bicombing on a metric space $(X,\sigma)$ is a map $\gamma : X \times X \times [0,1] \ra X$ such that, for each distinct pair $a,b \in X$, the map $[0,\sigma(a,b)] \to X$ given by $t \mapsto \gamma_{a,b}\left(\tfrac{t}{\sigma(a,b)}\right)$ is a quasigeodesic from $a$ to $b$ with uniform constants.

There are various fellow-travelling conditions that a bicombing may enjoy. We say that a bicombing is \emph{roughly conical} if
\[\exists C \geq 0, \forall a,b,a',b' \in X, \forall t \in [0,1], \sigma(\gamma_{a,b}(t),\gamma_{a',b'}(t)) \leq (1-t)\sigma(a,a')+t\sigma(b,b')+C,\]
and it is \emph{roughly reversible} if it satisfies the following coarse version of symmetry:
\[ \exists C \ge 0, \forall a,b \in X, \forall t \in [0,1], \sigma(\gamma_{a,b}(t),\gamma_{b,a}(1-t)) \le C. \]

From the existence of conical, reversible, isometry-invariant geodesic bicombings on injective metric spaces \cite{lang:injective}, we deduce the following. 

\bmcor[Corollary~\ref{cor:hhs_bicombing}] \label{cor:hhs_bicombing_main}
Let $(X,\s)$ be a hierarchically hyperbolic space. Then $(X,\sigma)$ admits a roughly conical, roughly reversible, quasigeodesic bicombing that is coarsely equivariant under the automorphism group of $(X,\s)$. More strongly, the combing lines are rough geodesics for the metric $\sigma$.
\emcor

In particular, this applies to Teichm\"uller space with either of the standard metrics, with equivariance under the action of the mapping class group.  This particular application was unknown to us until comparing results with Durham, Minsky, and Sisto \cite{durham_minsky_sisto:semihyperbolic}.

\mk

Corollary~\ref{cor:hhs_bicombing_main} gives a positive answer to Question~8.1 of~\cite{engelwulff:coronas}, as any roughly conical bicombing is \emph{coherent and expanding}, in the terminology of~\cite{engelwulff:coronas}. Engel and Wulff proved that the existence of such a bicombing has a large number of $K$--theoretic consequences. This positive answer also allows one to apply work of Fukaya--Oguni (see \cite{fukayaoguni:cartanhadamard}) to deduce the coarse Baum--Connes conjecture for hierarchically hyperbolic groups. The coarse Baum--Connes conjecture is also a consequence of finite asymptotic dimension, which is a known property of uniformly proper hierarchically hyperbolic spaces \cite{behrstockhagensisto:asymptotic}.

\subsection{Consequences for groups} ~

We now turn to the case of hierarchically hyperbolic groups, which, as we have seen, act properly cocompactly on coarsely injective spaces. Here we describe some of the consequences of such an action.

\mk

Following Alonso and Bridson \cite{alonsobridson:semihyperbolic}, we say that a bicombing is \emph{bounded} if it satisfies the following weak two-sided fellow-traveller property:
\[\exists C \geq 0, \forall a,b,a',b' \in X, \forall t \in [0,1], \sigma(\gamma_{a,b}(t),\gamma_{a',b'}(t)) \leq C \max(\sigma(a,a'),\sigma(b,b'))+C.\]
Note that if a bicombing is roughly conical, then it is bounded. A finitely generated group is said to be \emph{semihyperbolic} if it has a Cayley graph that admits an equivariant bounded quasigeodesic bicombing. 

Among other results, Alonso and Bridson proved that semihyperbolicity implies the existence of a quadratic isoperimetric function, that the group has soluble word and conjugacy problems, and that an algebraic flat torus theorem holds \cite{alonsobridson:semihyperbolic}.  For more discussion of the consequences of semihyperbolicity we refer the reader to \cite{bridsonhaefliger:metric}.  Semihyperbolicity was introduced as a response to Gromov's call for a weaker form of hyperbolicity in his original essay on hyperbolic groups, and it fits into the framework of algorithmic properties developed in \cite{epsteincannonholtlevypatersonthurston:word}. For example, semihyperbolicity is implied by biautomaticity, but not by automaticity. A survey can be found in \cite{bridson:semihyperbolicity}.

For hierarchically hyperbolic groups $G$, the freeness of the regular action of $G$ on $(G,\sigma)$ allows the bicombing of Corollary~\ref{cor:hhs_bicombing_main} to be pulled back to the Cayley graph of $G$ \cite{alonsobridson:semihyperbolic}.

\bmcor[Corollary~\ref{semihyperbolicity}]
Every hierarchically hyperbolic group is semihyperbolic. In particular, the mapping class group of a surface of finite type is semihyperbolic.
\emcor

The mapping class group case is also a consequence of unpublished work of Hamenstädt \cite{hamenstadt:biautomatic}, and is related to Mosher's automaticity theorem \cite{mosher:mapping}.

We should emphasise that the same result for mapping class groups has been obtained by rather different methods, simultaneously and independently, by Durham, Minsky, and Sisto (see~\cite{durham_minsky_sisto:semihyperbolic}). This will be discussed more in Section~\ref{subsection:comparison}.

\mk

It is well-known that mapping class groups have finitely many conjugacy classes of finite subgroups (see \cite{bridson:finiteness}), a property that they share with hyperbolic groups. However, to the authors' knowledge, all existing proofs of this fact rely on deep results that do not generalise to other settings, such as Kerckhoff's celebrated solution of the Nielsen realisation problem \cite{kerckhoff:nielsen}. It is interesting to ask whether there is a proof that avoids such powerful machinery, and indeed a more general question about hierarchically hyperbolic groups was asked in \cite{hagenpetyt:projection}. The question of whether all hierarchically hyperbolic groups have finitely many conjugacy classes of finite subgroups has resisted a number of attempted resolutions. 

The fact that hierarchically hyperbolic groups act properly cocompactly on coarsely injective spaces makes the following a simple consequence of Lang's result about bounded actions on injective spaces \cite[Proposition~1.2]{lang:injective}.

\bmthm[Corollary~\ref{prop:torsion}]
Hierarchically hyperbolic groups have finitely many conjugacy classes of finite subgroups.
\emthm

It is interesting to note that this applies in particular to many quotients of mapping class groups \cite{behrstockhagensisto:asymptotic,behrstockhagenmartinsisto:combinatorial}. It is also a simple consequence that residually finite hierarchically hyperbolic groups are virtually torsionfree.

\mk

We now summarise the consequences for hierarchically hyperbolic groups of the results described above (also see Remark~\ref{remark:HHS_aut_is_enough} for a comment on their generality).

\bmcor \label{cor:list}
Every hierarchically hyperbolic group $G$ has the following properties. 
\begin{shortitem}
\item $G$ acts properly cocompactly on a proper coarsely injective space.
\item $G$ has finitely many conjugacy classes of finite subgroups.
\item $G$ is semihyperbolic.  In particular:
\begin{shortitem}
\item the conjugacy problem in $G$ is soluble, and it can be solved in doubly exponential time;
\item any polycyclic subgroup of $G$ is virtually abelian;
\item any finitely generated abelian subgroup of $G$ is quasi-isometrically embedded;
\item the centraliser of any finite subset of $G$ is finitely generated, quasi-isometrically embedded, and semihyperbolic.
\end{shortitem}
\item For any ring $R$, if the cohomological dimension $cd_R(G)$ is finite, then $cd_R(G) \leq asdim(G)+1$ \cite[Theorem~C]{engelwulff:coronas}.
\item $G$ is a strongly shortcut group.
\end{shortitem}
\emcor

The result about polycyclic subgroups can also be deduced from the Tits alternative for hierarchically hyperbolic groups established in \cite{durhamhagensisto:boundaries}. The result about finitely generated abelian subgroups was proved by Plummer \cite{plummer:abelian}.  The other consequences are new, however. The result about the conjugacy problem extends work of Abbott and Behrstock showing that it can be solved in exponential time for \emph{Morse elements} of hierarchically hyperbolic groups \cite{abbottbehrstock:conjugator}, and generalises the fact that, in mapping class groups, it can always be solved in exponential time \cite{masurminsky:geometry:2,tao:linearly,behrstockdrutu:divergence}. In the case of cubical groups, a beautiful result of Niblo and Reeves states that every cubical group is biautomatic \cite{nibloreeves:geometry}, and semihyperbolicity is a direct consequence of this. We emphasise, though, that the class of hierarchically hyperbolic groups is considerably larger than just cubical groups and mapping class groups.

\subsection{Bounded packing} ~

The bounded packing property for subgroups of finitely generated groups was introduced as a metric abstraction of tools used by several authors to prove intersection properties of subgroups of hyperbolic groups \cite{gitikmitraripssageev:widths,rubinsteinsageev:intersection}, and in turn as a stepping stone towards ensuring cocompactness of the cube complex associated with a finite collection of quasiconvex codimension--1 subgroups \cite{sageev:codimension,nibloreeves:coxeter,hruskawise:finiteness}. We recall the definition in Section~\ref{section:hhs}; see \cite{hruskawise:packing,hruskawise:finiteness} for more motivation and background. The prototypical example is that of a quasiconvex subgroup of a hyperbolic group. That such subgroups have bounded packing was first established by Gitik, Mitra, Rips, and Sageev, using compactness of the boundary \cite{gitikmitraripssageev:widths}, and another proof was given by Hruska--Wise, using induction on the \emph{height} of the subgroups \cite{hruskawise:packing}. 

More general examples have been provided by Antol\'in, Mj, Sisto, and Taylor, who use induction on height to show that finite collections of \emph{stable subgroups} in any finitely generated group have bounded packing \cite{antolinmjsistotaylor:intersection}. Stable subgroups were introduced by Durham and Taylor \cite{durhamtaylor:convex}, and they are always hyperbolic. More generally, \emph{Morse subgroups} were introduced independently by Tran \cite{tran:onstrongly} and Genevois \cite{genevois:hyperbolicities}, and the notion is implicit in earlier work of Sisto \cite{sisto:hyperbolically_embedded}. Notably, Tran proved that any finite collection of Morse subgroups has bounded packing \cite[Theorem~1.2]{tran:onstrongly}, again by using induction on height. 

\bmthm[Corollary~\ref{cor:boundedpacking}] \label{mthm:packing}
Every finite collection of hierarchically quasiconvex subgroups of a group that is a hierarchically hyperbolic space (in particular, of any hierarchically hyperbolic group) has bounded packing.
\emthm

For many groups that are HHSs (including all HHGs), every stable subgroup is hierarchically quasiconvex \cite{abbottbehrstockdurham:largest,hhs_convexity}. Theorem~\ref{mthm:coarsely_helly_shortcut} also applies to subsurface stabilisers in the mapping class group, which are neither Morse nor stable. See Section~\ref{subsection:hhsbackground} for the definition of hierarchical quasiconvexity. 

Our proof of this result is purely geometric. It relies on a very strong result for quasiconvex subsets of hyperbolic spaces that was proved by Chepoi, Dragan, and Vax\`es \cite{chepoidraganvaxes:core}; we state it as Theorem~\ref{thm:cdvhelly}. Their theorem does not seem to have garnered the notice it deserves in geometric group theory. For instance, it yields what appears to be the simplest and most natural proof of bounded packing for quasiconvex subgroups of hyperbolic groups. One case of our hierarchical generalisation of their result can be stated as follows.

\bmthm[Theorem~\ref{thm:hellyhqc}]
Let $X$ be a hierarchically hyperbolic space, and let $\mathcal Q$ be a finite collection of hierarchically quasiconvex subsets of $X$. If every pair of elements of $\mathcal Q$ is $r$--close, then there is a point of $X$ that is $R$--close to every element of $\mathcal Q$, where $R$ does not depend on the cardinality of $\mathcal Q$.
\emthm

\subsection{Comparison to the work of Durham, Minsky, and Sisto {\cite{durham_minsky_sisto:semihyperbolic}}} \label{subsection:comparison} ~

Let us now say a few words about the difference between the present article and the work of Durham, Minsky, and Sisto \cite{durham_minsky_sisto:semihyperbolic}. As noted, both articles independently prove that mapping class groups are semihyperbolic, but the approaches differ greatly. In both cases, this fact is deduced from a stronger statement in a more general setting, but those two statements are very different in flavour. Their results hold for hierarchically hyperbolic spaces with the extra assumption of \emph{colorability}, and they deduce interesting corollaries about bicombings on the Teichmüller space with the Teichmüller metric, and the existence of barycentres. These results are also consequences of Theorem~\ref{thm:hhs_coarsely_helly} and Corollary~\ref{cor:hhs_bicombing_main}.

Our construction is built on the fact that intervals in hierarchically hyperbolic spaces can be approximated by finite CAT(0) cube complexes (proved in~\cite{hhs_quasiflats}). The main result of Durham, Minsky, and Sisto is that these approximations are furthermore \emph{stable}, meaning that a small change in the endpoints of the interval induces a small change in the approximating CAT(0) cube complex. This stability result may prove extremely useful for other purposes.

If we want to compare the bicombing we obtain to the one from \cite{durham_minsky_sisto:semihyperbolic} in the simplest case of a CAT(0) cube complex, our bicombing looks like the geodesic CAT(0) bicombing, whereas their bicombing is more similar to (but not the same as) Niblo--Reeves normal cube paths \cite{nibloreeves:geometry}. One notable difference is that our bicombing is roughly conical and their bicombing is merely bounded, which is not enough to deduce the consequences of Section~\ref{subsection:metric_consequences}. On the other hand, their bicombing paths are known to be hierarchy paths, whilst ours are not.

\subsection{Structure of the article} ~

In Section~\ref{section:newmetric}, we recall basic definitions of coarse median spaces, and we explain the extra property we need, a stronger approximation of median intervals by CAT(0) cube complexes. We then define a new distance, and we prove that it is quasi-isometric to the original one, is weakly roughly geodesic, and its balls are coarsely median-convex.

In Section~\ref{section:hhs}, we treat hierarchical hyperbolicity, and prove that hierarchically quasiconvex subsets satisfy a coarse version of the Helly property. We use this to show that the new distance makes hierarchically hyperbolic spaces coarsely injective, and deduce semihyperbolicity of hierarchically hyperbolic groups. We also show that hierarchically quasiconvex subgroups have bounded packing.

In Section~\ref{section:shortcut}, we recall the definition of a strongly shortcut group, and prove that coarse injectivity implies the strong shortcut property.

\mk

{\bf Acknowledgements:} We thank Mark Hagen and Anthony Genevois for many helpful comments and suggestions, and Victor Chepoi, Alexander Engel and Damian Osajda for interesting discussions. We would like to thank Matthew Durham, Yair Minsky, and Alessandro Sisto for friendly discussions about our their work and ours. We thank the anonymous referees for their careful and thorough readings of the article.

The first named author was supported by the french grant ANR-16-CE40-0022-01 AGIRA.

The second named author was supported by the ERC grant GroIsRan. 

\section{Coarse median spaces with quasicubical intervals} \label{section:newmetric}

\subsection{Background on coarse median spaces} ~

\emph{Coarse median spaces}, defined by Bowditch in~\cite{bowditch_coarse_median}, are a generalisation of CAT(0) cube complexes and Gromov-hyperbolic spaces, and the class is rich enough to encompass mapping class groups of finite type surfaces. The general idea is to associate to every triple of points in the space a point that satisfies the axioms of a usual median up to controlled error. This point will be called the coarse median.

\mk

Let us recall here that a \emph{median} $\mu : X^3 \ra X$ on a set $X$ is a map satisfying (where we write equivalently $\mu(x,y,z)$ or $\mu_{x,y,z}$ to increase readability):
\begin{shortitem}
\item $\mu(x,y,z)$ is symmetric in $x,y,z$,
\item $\forall x,y \in X, \mu(x,x,y)=x$ and
\item $\forall a,b,x,y,z \in X, \mu(a,b,\mu_{x,y,z}) = \mu(\mu_{a,b,x},\mu_{a,b,y},z).$
\end{shortitem}
The pair $(X,\mu)$ is called a \emph{median algebra}. The \emph{rank} of $(X,\mu)$ is the supremum of all $\nu \in \N$ such that there exists an injective median homomorphism from the $\nu$--cube $\{0,1\}^\nu$ into $X$. 

\mk

Every finite median algebra can be seen as the $0$-skeleton of a CAT(0) cube complex (see~\cite{chepoi:graphs,roller:poc}).

Let $(X,d)$ be a metric space. For any $x,y \in X$, let $I_d(x,y) = \{z \in X \st d(x,z)+d(z,y)=d(x,y)\}$ denote the interval between $x$ and $y$. The metric space $(X,d)$ is called
\emph{metric median} if $\forall x,y,z \in X, I_d(x,y) \cap I_d(y,z) \cap
I_d(x,z)$ is a singleton, say $\{\mu(x,y,z)\}$. In this case, $\mu$ defines
a median on $X$. Examples of median metric spaces include trees, $1$--skeletons
of CAT(0) cube complexes with the combinatorial distance, and
$L^1$ spaces.

\mk

In a Gromov-hyperbolic space $X$, the three intervals joining three points may not intersect precisely in a singleton, but by definition they do coarsely intersect with uniformly bounded diameter. This suggests defining a map $X^3 \ra X$ that satisfies the axioms of a median up to bounded error. This is made precise by the following definition due to Bowditch \cite{bowditch_coarse_median}, generalising the \emph{centroid} defined for mapping class groups in \cite{behrstockminsky:centroids}.

\bdf[Coarse median space] \label{def:coarse_median} Let $(X,d)$ be a metric space. A map $\mu:X^3 \ra X$ is called a \emph{coarse median} if there exists $h:\N \ra (0,\pif)$ such that
\begin{shortitem}
\item For all $a,b,c,a',b',c' \in X$, we have $d(\mu(a,b,c),\mu(a',b',c')) \leq h(0)(d(a,a')+d(b,b')+d(c,c')) + h(0)$.
\item For each finite non-empty set $A \subset X$, with $|A| \leq n$, there exists a finite median algebra $(Q,\mu_Q)$ and maps $\pi:A \ra Q$, $\lambda:Q \ra X$ such that for every $\alpha,\beta,\gamma \in Q$, we have $d(\lambda \mu_Q(\alpha,\beta,\gamma),\mu(\lambda \alpha,\lambda \beta,\lambda \gamma)) \leq h(n)$, and for every $a \in A$, we have $d(a,\lambda(\pi(a))) \leq h(n)$.
\end{shortitem}
We say that the triple $(X,\mu,d)$ is a \emph{coarse median space}. If $Q$ can always be chosen to have rank at most $\nu$, we say that $\mu$ has rank at most $\nu$. As with median algebras, we shall write $\mu_{a,b,c}=\mu(a,b,c)$ interchangeably. Note that we are also free to assume that $\mu(a,b,c)$ is symmetric in $a,b,c$, and that $\mu(a,a,b)=a$ \cite[p.73]{bowditch_coarse_median}.
\edf

We now recall the definitions of intervals and coarse convexity in coarse median spaces.

\bdf[Median interval]
For a pair of points $a,b \in X$, the median interval between $a$ and $b$ is defined as
\[ [a,b] = \{\mu(a,b,x) \st x \in X\}.\]
\edf

\bdf[Coarse median-convexity]
For a constant $M\ge0$, a subset $Y$ of $X$ is said to be $M$--coarsely median-convex if \[d(Y,\mu(x,y,y'))\le M \text{ for all }y,y'\in Y,\text{ }x\in X.\]
\edf

We finish by introducing some terminology.

\bdf[Weakly roughly geodesic] \label{def:wrg}
Recall that a metric space $(X,d)$ (or, more briefly, the metric $d$) is called \emph{roughly geodesic} if there exists a constant $C_d \geq 0$ such that, for any $a,b\in X$, there exists a $(1,C_d)$--quasi-isometric embedding of the interval $f:[0,d(a,b)] \ra X$ such that $f(0)=a$ and $f(d(a,b))=b$. 

We say that a metric space $(X,d)$ is called \emph{weakly roughly geodesic} if there exists a constant $C'_d \geq 0$ such that, for any $a,b\in X$ and any nonnegative $r\le d(a,b)$, there is a point $c\in X$ with $|d(a,c)-r|\le C'_d$ and $d(a,c)+d(c,b)\le d(a,b)+C'_d$. 
\edf

\begin{rmk}
Note that every roughly geodesic metric space is weakly roughly geodesic. Moreover, any metric space $(X,d)$ that is weakly roughly geodesic with constant $C'_d$ is necessarily $(4C'_d,4C'_d)$--quasigeodesic. Indeed, given $x,y\in X$, one can repeatedly take $r=3C'_d$ in the definition of weak rough geodesicity to get a sequence $x=w_0,w_1,\dots,w_n=y$ such that $d(w_i,w_{i+1})\in[2C_d',4C_d']$ and $d(w_i,y)\le d(w_{i-1},y)-C_d'$, and the points of this sequence form a quasigeodesic from $x$ to $y$.
\end{rmk}

\subsection{Construction of a new metric} ~

Let $(X, \mu, d)$ be a coarse median space. Following Bowditch's construction of an injective metric on a median metric space \cite{bowditch_median_injective}, we shall define a new metric $\sigma$ on~$X$.

\bdf[Contraction] For a constant $K \geq 0$, a map $\Phi : X \ra \R$ is called a $K$--contraction if:
\bit
\item $\Phi$ is $(1,K)$--coarsely Lipschitz, i.e. $\forall a,b \in X, |\Phi(x)-\Phi(y)| \leq d(x,y)+K$. 
\item $\Phi$ is a $K$--quasi-median homomorphism, i.e. \[\forall a,b,c \in X, |\Phi(\mu(a,b,c)) - \mu_\R(\Phi(a),\Phi(b),\Phi(c))| \leq K,\] where $\mu_\R$ denotes the standard median on $\R$.
\eit
\edf

\bdf[New metric]
For $K > 0$, we define a new metric $\sigma$ on $X$ as follows. Given $a,b \in X$, let $\sigma(a,b)$ denote the supremum of all $r \geq 0$ such that there exists a $K$--contraction $\Phi : X \ra \R$ such that $\Phi(a)=0$ and $\Phi(b)=r$.
\edf

The assumption that $K$ is nonzero is needed to ensure that $\sigma$ separates points in the setting of coarse median spaces. In the special case where $X$ is a CAT(0) cube complex, we may take $K=0$. More precisely, if $X$ is a CAT(0) cube complex endowed with the piecewise $\ell^p$ length metric for $p \in \{1,2,\infty\}$ for instance, then the new metric $\sigma$ for $K=0$ is the piecewise $\ell^\infty$ length metric on $X$.

\blem
The function $\sigma$ is a metric on $X$.
\elem

\begin{proof}
Let $a,b \in X$ be distinct. Consider the map $\Phi : X \ra \{0,K\}$ that sends $b$ to $K$ and everything else to $0$. It is a $K$--contraction, and so $\sigma(a,b) \geq K > 0$.

The proof of the triangle inequality is identical to~\cite[Lemma~3.1]{bowditch_median_injective}. For the reader's convenience, we repeat it here. Let $a,b,c \in X$. For each $r<\sigma(a,b)$ there exists a $K$--contraction $\Phi_r:X \ra \R$ such that $|\Phi_r(a)-\Phi_r(b)|\ge r$. We certainly have $\sigma(a,c)\ge|\Phi_r(c)-\Phi_r(a)|$ and $\sigma(b,c)\ge|\Phi_r(b)-\Phi_r(c)|$, so
\begin{align*}
\sigma(a,c) +\sigma(c,b) &\geq \sup\{|\Phi_r(c)-\Phi_r(a)|+|\Phi_r(b)-\Phi_r(c)| \st r<\sigma(a,b)\} \\
    &\ge \sup\{|\Phi_r(b)-\Phi_r(a)| \st r<\sigma(a,b)\} \hspace{2mm}=\hspace{2mm} \sigma(a,b). \qedhere
\end{align*}
\end{proof}

\begin{rmk}
Although the construction of $\sigma$ depends on the choice of a positive constant $K$, the actual choice of $K$ will not matter to us here. If $K_1<K_2$, then any $K_1$--contraction is automatically a $K_2$--contraction, so $\sigma_{K_1}\le\sigma_{K_2}$. On the other hand, if $\Phi$ is a $K_2$--contraction, then $\frac{K_1}{K_2}\Phi$ is a $K_1$--contraction, so $\sigma_{K_1}\ge\frac{K_1}{K_2}\sigma_{K_2}$. Thus any two choices of $K$ give biLipschitz metrics.
\end{rmk}

We record the following simple consequence of the definition of $\sigma$.

\blem \label{lem:acting_on_new_metric}
If a group $G$ is acting isometrically on a coarse median space $(X,\mu,d)$ by median isometries, in the sense that $g\mu(x,y,z)=\mu(gx,gy,gz)$ for all $g\in G$, $x,y,z\in X$, then the induced action of $G$ on $(X,\mu,\sigma)$ is isometric.
\elem

\bp
For any $g\in G$ and $x,y\in X$, if $\Phi$ is a $K$--contraction with $\Phi(x)=0$ and $\Phi(y)=r$, then $\Phi'=\Phi g^{-1}$ is a $K$--contraction with $\Phi'(gx)=0$ and $\Phi'(gy)=r$.
\ep

In order to help understand the metric $\sigma$, we shall work with coarse median spaces that have the following property, which is a strengthening of the second axiom of coarse median spaces for sets $A=\{a,b\}$ with cardinality $2$. We require an approximation of the entire median interval $[a,b]$ with uniform constants, and also that the comparison map is a quasi-isometry and not just coarsely invertible.

\bdf[Quasicubical intervals]
Let $(X, \mu, d)$ be a coarse median space. We say that it has \emph{quasicubical intervals} if it has finite rank $\nu$ and there exists $\kappa \geq 1$ such that the following hold. For every $a,b \in X$, there exists a finite CAT(0) cube complex $Q$ of dimension at most $\nu$, endowed with the $\ell^1$ metric $d_Q$ and the median $\mu_Q$, such that there exists a map $\lambda :  Q \ra [a,b]$ satisfying:
\bit
\item $\lambda$ is a $(\kappa,\kappa)$--quasi-isometry, i.e. $\lambda$ is $\kappa$--coarsely onto and
\[\forall \alpha,\beta \in Q, \f{1}{\kappa }d_Q(\alpha,\beta)-\kappa  \leq d(\lambda(\alpha),\lambda(\beta)) \leq \kappa d_Q(\alpha,\beta)+\kappa ;\]
\item $\lambda$ is a $\kappa$--quasi-median homomorphism, i.e.
\[\forall \alpha,\beta,\gamma \in Q, d(\lambda(\mu_Q(\alpha,\beta,\gamma)),\mu(\lambda(\alpha),\lambda(\beta),\lambda(\gamma)) \leq \kappa .\]\eit
\edf

Obviously this is satisfied by finite dimensional CAT(0) cube complexes, or indeed by any space with a global quasi-median quasi-isometry to a CAT(0) cube complex. 

\bpro \label{prop:applies_to_hhss}
Hierarchically hyperbolic spaces have quasicubical intervals, as do coarse median spaces satisfying the axioms (B1)-(B10) in~\cite{bowditch_convexhulls}.
\epro

\bp
In hierarchically hyperbolic spaces, the notion of median intervals used here coincides coarsely with the hierarchically quasiconvex hull of a pair of points defined in \cite{behrstockhagensisto:hierarchically:2}, by~\cite[Corollary~5.12]{hhs_convexity} and \cite[Lemma~8.1]{bowditch_quasiflats}. The first statement is thus a special case of \cite[Theorem~2.1]{hhs_quasiflats}. The second statement is exactly \cite[Theorem~1.3]{bowditch_convexhulls}.
\ep

As noted by Bowditch, every hierarchically hyperbolic space satisfies the axioms (B1)-(B10) in~\cite{bowditch_convexhulls}. It is not known whether all cocompact cube complexes can be given a structure that satisfies these axioms.

\mk

We can now state the main result of this section. It sums up Lemma~\ref{lem:acting_on_new_metric}, Proposition~\ref{prop:bilipschitzmetrics}, Proposition~\ref{prop:coarsely_geodesic}, and Lemma~\ref{lem:balls_median_convex}, and the proof is split over the next three subsections. 

\bthm \label{thm:properties_of_sigma}
Assume that the coarse median space $(X, \mu, d)$ has quasicubical intervals and is roughly geodesic. The metrics $\sigma$ and $d$ are quasi-isometric, $\sigma$ is weakly roughly geodesic, and balls for $\sigma$ are uniformly coarsely median-convex. Moreover, $\sigma$ is invariant under the group of median isometries of $(X,\mu,d)$.
\ethm

\subsection{The metrics \texorpdfstring{$d$}{d} and
  \texorpdfstring{$\sigma$}{σ} are quasi-isometric} ~

Here we shall prove that the new distance $\sigma$ is quasi-isometric to the original distance $d$. We need the following technical result for coarse median spaces, which is a special case of Lemmas~2.18 and~2.19 of \cite{niblowrightzhang:four}.

\blem \label{lem:median_5_points}
In any coarse median space $(X,d,\mu)$, there exists a constant $H_5 \geq 0$ such that the following inequalities hold for any $a,b,x,y,z\in X$.
\beq& & d(\mu(a,b,\mu_{x,y,z}),\mu(\mu_{a,b,x},\mu_{a,b,y},z)) \leq H_5 \\
& & d(\mu(a,b,\mu_{x,y,z}),\mu(\mu_{a,b,x},\mu_{a,b,y},\mu_{a,b,z})) \leq H_5. \eeq
\elem

We will now prove that, up to multiplicative and additive constants, one can restrict to contractions defined on the interval between
two points for the definition of $\sigma$.

\blem \label{lem:sigma'}
For each $a,b \in X$, let $\sigma'(a,b)$ denote the supremum of all $r \geq 0$ such that there exists a $K$--contraction $\Phi' : [a,b] \ra \R$ for which $\Phi'(a)=0$ and $\Phi'(b)=r$. There exists $L \geq 1$ such that, for each $a,b \in X$, we have $\sigma(a,b) \leq \sigma'(a,b) \leq L \sigma(a,b)$.
\elem

\bp
It is immediate that $\sigma(a,b) \leq \sigma'(a,b)$. Consider $r \geq 0$ and a $K$--contraction $\Phi' : [a,b] \ra \R$ such that $\Phi'(a)=0$ and $\Phi'(b)=r$. Define $\Phi : X \ra \R$ by $c \mapsto \Phi'(\mu(a,b,c))$. Since the map $c \mapsto \mu(a,b,c)$ is $(h(0),h(0))$--coarsely Lipschitz and $\Phi'$ is $(1,K)$--coarsely Lipschitz, we deduce that $\Phi$ is $(h(0),h(0)+K)$--coarsely Lipschitz.

\mk

Now let $x,y,z \in X$. According to Lemma~\ref{lem:median_5_points} we have
\[d(\mu(a,b,\mu_{x,y,z}),\mu(\mu_{a,b,x},\mu_{a,b,y},\mu_{a,b,z})) \leq H_5.\]
Hence, since $\Phi'$ is $(1,K)$--coarsely Lipschitz,
\[|\Phi'(\mu(a,b,\mu_{x,y,z})) - \Phi'(\mu(\mu_{a,b,x},\mu_{a,b,y},\mu_{a,b,z}))| \leq H_5 + K.\]
But $\Phi'$ is also a $K$--quasi-median homomorphism, and so
\[|\Phi'(\mu(\mu_{a,b,x},\mu_{a,b,y},\mu_{a,b,z})) - \mu_\R(\Phi'(\mu_{a,b,x}),\Phi'(\mu_{a,b,y}),\Phi'(\mu_{a,b,z}))| \leq K.\]
Combining these and recalling the definition of $\Phi$ enables us to conclude that $|\Phi(\mu_{x,y,z})-\mu_\R(\Phi(x),\Phi(y),\Phi(z))| \leq H_5 +2K$. Thus, if we set $L = \max\{h(0),1+\f{h(0)}{K},2+\f{H_5}{K}\}$, then we have that $\f{1}{L}\Phi$ is a $K$--contraction, and so $\sigma'(a,b) \leq L\sigma(a,b)$.
\ep

We can now deduce that $\sigma$ is quasi-isometric to $d$ in the setting of Theorem~\ref{thm:properties_of_sigma}.

\bpro \label{prop:bilipschitzmetrics}
If $(X,\mu,d)$ has quasicubical intervals, then $d$ and $\sigma$ are quasi-isometric.
\epro

\bp
Fix $a,b \in X$. First of all, since any $K$--contraction is $(1,K)$--coarsely Lipschitz, we have $\sigma(a,b) \leq d(a,b)+K$.

\mk

According to the quasicubicality of intervals, there exists a finite CAT(0) cube complex $Q$ of dimension at most $\nu$, and a map $\lambda : (Q,d_Q) \ra [a,b]$ that is a $(\kappa ,\kappa )$--quasi-isometry and a $\kappa $--quasi-median homomorphism. Then $\lambda$ has a quasi-inverse $\pi : [a,b] \ra (Q,d_Q)$ that is a $(\kappa',\kappa')$--quasi-isometry and a $\kappa'$--quasi-median homomorphism, where $\kappa'$ is a constant depending only on $\kappa$ and $h(0)$.

Note that we shall in fact use $Q$ to denote the vertex set, $d_Q$ to denote the combinatorial (piecewise $\ell^1$) distance on $Q$, and $\mu_Q$ to denote the median on $Q$. Let us denote by $\sigma_Q$ the piecewise $\ell^\infty$ distance on $Q$: we have $\sigma_Q \leq d_Q \leq \nu \sigma_Q$.

\mk

Since $Q$ is a CAT(0) cube complex, there exists a $0$--contraction $\Phi_Q : (Q,d_Q) \ra \Z$ such that $\Phi_Q(\pi(a))=0$ and $\Phi_Q(\pi(b)) = \sigma_Q(\pi(a),\pi(b))$ (see~\cite[\S7]{bowditch_median_injective}, \cite[Cor.~2.5]{bandeltvandevel:superextensions}). Let us consider $\Phi'=\f{\min\{1,K\}}{\kappa'}\Phi_Q \pi : [a,b] \ra \R$. Since $\pi$ is a $(\kappa',\kappa')$--quasi-isometry and $\Phi_Q$ is $1$--Lipschitz, we deduce that $\Phi'$ is $(1,K)$--coarsely Lipschitz. Furthermore, for every $x,y,z \in [a,b]$, we have:
\beq & &|\Phi_Q \pi(\mu_{x,y,z}) - \mu_\R(\Phi_Q\pi(x),\Phi_Q\pi(y),\Phi_Q\pi(z))|\\
 & \leq &|\Phi_Q\pi(\mu_{x,y,z})-\Phi_Q(\mu_Q(\pi(x),\pi(y),\pi(z)))| \\
 &  &+\quad |\Phi_Q (\mu_Q(\pi(x),\pi(y),\pi(z))) - \mu_\R(\Phi_Q\pi(x),\Phi_Q\pi(y),\Phi_Q\pi(z))| \\
 & \leq &d_Q(\pi(\mu_{x,y,z}),\mu_Q(\pi(x),\pi(y),\pi(z))) \quad\le\quad \kappa',\eeq
 so $\Phi'$ is $K$--quasi-median.
 
The map $\Phi'$ is therefore a $K$--contraction on $[a,b]$, and $\Phi'(a)=0$ and $\Phi'(b)=\f{\min\{1,K\}}{\kappa'}\sigma_Q(\pi(a),\pi(b)) \geq \f{\min\{1,K\}}{\nu \kappa'}d_Q(\pi(a),\pi(b))$. Using Lemma~\ref{lem:sigma'}, we deduce that $d_Q(\pi(a),\pi(b))\le \f{\nu \kappa'L}{\min\{1,K\}}\sigma(a,b)$. But $\pi$ is a $(\kappa',\kappa')$--quasi-isometry, so we also have $d_Q(\pi(a),\pi(b)) \ge \f{1}{\kappa'}d(a,b)-\kappa'$.

\mk

In conclusion, we have 
\[
\f{\min\{1,K\}}{\nu {\kappa'}^2L}d(a,b) - \f{\min\{1,K\}}{\nu L} 
	\quad\le\quad \sigma(a,b) \quad\le\quad d(a,b)+K
\]
for all $a,b\in X$.
\ep

\subsection{The metric \texorpdfstring{$\sigma$}{σ} is weakly roughly geodesic} ~

Recall that $(X,\mu,d)$ is a coarse median space with corresponding function $h$, that $X$ has quasicubical intervals (though this will only be used for Proposition~\ref{prop:coarsely_geodesic} in this section), and that the metric $d$ is $C_d$--roughly geodesic. We shall prove that the new metric $\sigma$ is weakly roughly geodesic (see Definition~\ref{def:wrg}). This will be the most difficult part of the proof of Theorem~\ref{thm:properties_of_sigma}.

\mk

Let $a,b \in X$, let $E$ be a small positive constant, and consider $K$--contractions $\Phi_1 : X \ra [0,r]$ and $\Phi_2 : X \ra [r,r+s]$ (for some $r,s \geq E$) such that $\Phi_1(a)\le E$ and $\Phi_2(b)\ge r+s-E$. We want to find a criterion to ensure that we can combine $\Phi_1$ and $\Phi_2$ into a contraction $\Phi$ such that $\Phi(a)=0$ and $(r+s)-\Phi(b)$ is bounded above by some constant.

\blem \label{lem:join_of_contractions}
Assume that $a,b,\Phi_1,\Phi_2,r,s,E$ are as above. Let $D=h(0)(3K+4C_d)+4K+h(0)$. If $t \in [0,\min\{r,s\}-D+K-E]$ is such that the sets
\[Z_1=\{z \in X \st \Phi_1(z) \leq r-t-K\} \quad\mbox{ and }\quad Z_2=\{z \in X \st \Phi_2(z) \geq r+t+K\}\]
are disjoint, then $\sigma(a,b) \geq r+s-2t-2D-2E$.
\elem

\bp
For $m\in\{0,1,2\}$, let us write $Y^m_1 = \{x \in X \st \Phi_1(x) \leq r-t-D+mK\}$ and $Y^m_2=\{x \in X \st \Phi_2(x) \geq r+t+D-mK\}$. Note that if $m_1<m_2$, then $Y_i^{m_1}\subset Y_i^{m_2}$.

\mk

{\bf Claim 1: }$d(Y^2_1,Y^2_2) \geq D-4K$.

\mk

{\bf Proof of Claim 1:}
Let $x_1 \in Y^2_1$ and $x_2 \in Y^2_2$. Since $Y^2_2 \subset Z_2$, we have $x_2 \not\in Z_1$, so $\Phi_1(x_2) > r-t-K$. We also have $\Phi_1(x_1) \leq r-t-D+2K$, so $|\Phi_1(x_1) - \Phi_1(x_2)| \geq D-3K$. As $\Phi_1$ is $(1,K)$--coarsely Lipschitz, we have $|\Phi_1(x_1) - \Phi_1(x_2)| \leq d(x_1,x_2)+K$, and hence $d(x_1,x_2) \geq D-4K. \hfill\diamondsuit$

\mk

{\bf Claim 2: }$d(Y^1_1,Y^1_2) \geq 3K+4C_d$.

\mk

{\bf Proof of Claim 2:}
Let $x_1 \in Y^1_1$ and $x_2 \in Y^1_2$, and set $y_1 = \mu(a,b,x_1) \in [a,b]$ and $y_2 = \mu(a,b,x_2) \in [a,b]$. We know that $\Phi_1(y_1) \leq \mu_\R(\Phi_1(a),\Phi_1(b),\Phi_1(x_1))+K$. We also have $\Phi_1(a)\le E$ by assumption, and $\Phi_1(x_1) \leq r-t-D+K$. As this latter quantity is at least $E$, $\mu_\R(\Phi_1(a),\Phi_1(b),\Phi_1(x_1)) \leq r-t-D+K$. Hence $\Phi_1(y_1) \leq r-t-D+2K$, so $y_1 \in Y^2_1$. A similar argument shows that $y_2 \in Y^2_2$.

According to Claim 1, $d(y_1,y_2) \geq D-4K$. Since $\mu$ is $(h(0),h(0))$--coarsely Lipschitz with respect to each variable, we have $d(y_1,y_2) \leq h(0) d(x_1,x_2)+h(0)$, so $d(x_1,x_2) \geq \f{d(y_1,y_2)-h(0)}{h(0)} \geq \f{D-4K-h(0)}{h(0)}=3K+4C_d$, as desired. $\hfill\diamondsuit$ 

\mk

{\bf Claim 3:} \emph{The set $\{\mu_{x,y,z} \st x,y \in Y^0_1,z\in X\}$ is disjoint from $Y^0_2$, and $\{\mu_{x,y,z} \st x,y \in Y^0_2,z\in X\}$ is disjoint from $Y^0_1$.}

\mk

{\bf Proof of Claim 3:} Fix $x,y \in Y^0_1$ and $z \in X$. Since $\Phi_1(x),\Phi_1(y) \leq r-t-D$, we deduce that $\mu_\R(\Phi_1(x),\Phi_1(y),\Phi_1(z)) \leq r-t-D$, and it follows that $\Phi_1(\mu_{x,y,z}) \leq r-t-D+K$, so $\mu_{x,y,z} \in Y^1_1$. Because we showed in Claim~2 that $d(Y^1_1,Y^1_2) \geq 3K+4C_d >0$, we know that $\mu_{x,y,z} \not\in Y^1_2$, and, in particular, $\mu_{x,y,z} \not\in Y^0_2$. The other case is similar. $\hfill\diamondsuit$

\mk

Write $Y = X \smallsetminus (Y^0_1 \cup Y^0_2)$, and consider $\Phi : X \ra [0,r+s-2t-2D]$ defined by:
\beq
\text{If }x \in Y^0_1 & \text{then} & \Phi(x)=\Phi_1(x). \\
\text{If }x \in Y^0_2 & \text{then} & \Phi(x)=\Phi_2(x)-2t-2D. \\
\text{If }x \in Y     & \text{then} & \Phi(x)=r-t-D.
\eeq
We have $\Phi(a)\le E$ and $\Phi(b)\ge r+s-2t-2D-E$, so if we prove that $\Phi$ is a $K$--contraction, then we may deduce that $\sigma(a,b) \geq r+s-2t-2D-2E$, the desired conclusion.

\mk

{\bf Claim 4:} \emph{$\Phi$ is $(1,K)$--coarsely Lipschitz.}

\mk

{\bf Proof of Claim 4:} Notice that $\Phi$ coincides on $Y^0_1 \cup Y$ with the composition of $\Phi_1 :X \ra [0,r]$ with the 1--Lipschitz map $m_t=\min(\cdot,r-t-D) : [0,r] \ra [0,r-t-D]$. Hence, if $x,y\in Y^0_1\cup Y$, then $|\Phi(x)-\Phi(y)| \leq |\Phi_1(x) - \Phi_1(y)| \leq d(x,y) + K$. A similar argument involving a maximum function applies if $x,y \in Y^0_2 \cup Y$.

Now suppose that $x \in Y^0_1$ and $y \in Y^0_2$. Since $d$ is $C_d$--roughly geodesic, there is a $(1,C_d)$--quasi-isometric embedding $f:[0,d(x,y)]\to X$ with $f(0)=x$, $f(d(x,y))=y$. For any $\eps>0$, there exists $\tau$ such that $f(\tau)\in Y^0_1$ but $f(\tau+\delta)\not\in Y^0_1$ for any $\delta>\eps$. (Were $f$ continuous, we could take $\eps=0$ and use the maximal $\tau$ with $f(\tau)\in Y^0_1$.) Write $z_1=f(\tau)$. We have $d(x,z_1)+d(z_1,y)\le d(x,y)+C_d$ and $\Phi_1(z_1)\le r-t-D$. Moreover, for any $\delta>\eps$ we have
\[\Phi_1(z_1) \hspace{2mm}\ge\hspace{2mm} \Phi_1(f(\tau+\delta))-(d(f(\tau),f(\tau+\delta))+K) \hspace{2mm}>\hspace{2mm} r-t-D-(\delta+C_d+K),\]
and so $\Phi_1(z_1)\ge r-t-D-C_d-K-\eps$. We can now similarly construct $z_2\in Y^0_2$ such that $d(z_1,z_2)+d(z_2,y)\le d(z_1,y)+C_d$ and $r+t+D\le\Phi_2(z_2)\le r+t+D+C_d+K+\eps$.

With these, we can compute
\beq |\Phi(x)-\Phi(y)| 
    &\le& |\Phi(x)-\Phi(z_1)| + |\Phi(z_1)-\Phi(z_2)| + |\Phi(z_2)-\Phi(y)| \\
    &=& |\Phi_1(x)-\Phi_1(z_1)| + |\Phi_1(z_1)-(\Phi_2(z_2)-2t-2D)| + |\Phi_2(z_2)-\Phi_2(y)| \\
    & \leq & (d(x,z_1) + K) + (2C_d+2K+2\eps) + (d(z_2,y) + K) \\
    & \leq & (d(x,y)+C_d-d(z_1,y)+K) + (2C_d+2K+2\eps) + (d(z_1,y)+C_d-d(z_1,z_2)+K) \\
    & = & d(x,y) -d(z_1,z_2) +4K +4C_d +2\eps \\
    & \leq & d(x,y) + K +2\eps, \eeq
where the last line comes from Claim~2: $d(z_1,z_2) \geq d(Y^0_1,Y^0_2) \geq d(Y^1_1,Y^1_2) \ge 3K+4C_d$. This is sufficient, because $\eps$ can be taken to be arbitrarily close to 0. $\hfill\diamondsuit$

\mk

{\bf Claim 5:} \emph{$\Phi$ is $K$--quasi-median.}

\mk

{\bf Proof of Claim 5:} As noted in the proof of Claim 4, on $Y^0_1\cup Y$ we have $\Phi=m_t\Phi_1$. As $m_t$ is a median homomorphism with respect to $\mu_\R$, if $x,y,z\in Y^0_1\cup Y$, then
\begin{align*} 
|\Phi(\mu_{x,y,z}) - \mu_\R(\Phi(x),\Phi(y),\Phi(z))| 
&= |m_t\Phi_1(\mu_{x,y,z}) - \mu_\R(m_t\Phi_1(x),m_t\Phi_1(y),m_t\Phi_1(z))| \\
&\leq |\Phi_1(\mu_{x,y,z}) - \mu_\R(\Phi_1(x),\Phi_1(y),\Phi_1(z))| \hspace{2mm}\le\hspace{2mm} K,
\end{align*}
and similarly if $x,y,z\in Y^0_2\cup Y$.

Assume now that $x,y \in Y^0_1$ and $z \in Y^0_2$. We ave that both $\Phi(x)$ and $\Phi(y)$ are at most $r-t-D$. Moreover, $\Phi(z)=\Phi_2(z)-2t-2D \geq r-t-D$, and the fact that $z\not\in Y^0_1$ implies that $\Phi_1(z)>r-t-D$. Thus $\mu_\R(\Phi(x),\Phi(y),\Phi(z))=\mu_\R(\Phi_1(x),\Phi_1(y),\Phi_1(z))\le r-t-D$. As $\Phi_1$ is $K$--quasi-median, we deduce that $|\mu_\R(\Phi(x),\Phi(y),\Phi(z)) - \Phi_1(\mu_{x,y,z})| \leq K$. According to Claim~3, we know that $\mu_{x,y,z} \not\in Y^0_2$, and so $\Phi(\mu_{x,y,z}) = m_{t}\Phi_1(\mu_{x,y,z})$. But $\mu_\R(\Phi(x),\Phi(y),\Phi(z)) \leq r-t-D$, so we conclude that $|\mu_\R(\Phi(x),\Phi(y),\Phi(z)) - \Phi(\mu_{x,y,z})| \leq K$. A similar argument applies when $x,y\in Y^0_2$ and $z\in Y^0_1$.

Assume finally that $x \in Y^0_1$, $y \in Y$, and $z \in Y^0_2$. Since $\Phi(x)=\Phi_1(x) \leq r-t-D$, $\Phi(y)=r-t-D$ and $\Phi(z)=\Phi_2(z)-2t-2D \geq r-t-D$, we have $\mu_\R(\Phi(x),\Phi(y),\Phi(z)) = r-t-D$. If $\mu_{x,y,z} \in Y$, then $\Phi(\mu_{x,y,z})=r-t-D=\mu_\R(\Phi(x),\Phi(y),\Phi(z))$. If $\mu_{x,y,z} \in Y^0_1$, then $\Phi(\mu_{x,y,z})=\Phi_1(\mu_{x,y,z}) \geq \mu_\R(\Phi_1(x),\Phi_1(y),\Phi_1(z))-K \geq r-t-D-K$, from which it follows that $|\mu_\R(\Phi(x),\Phi(y),\Phi(z)) - \Phi(\mu_{x,y,z})| \leq K$. A similar argument applies if $\mu_{x,y,z} \in Y^0_2$. 

We have shown that $|\mu_\R(\Phi(x),\Phi(y),\Phi(z)) - \Phi(\mu_{x,y,z})| \leq K$ in all cases. $\hfill\diamondsuit$

\mk

We have proved that $\Phi$ is a $K$--contraction. As stated above, this shows that $\sigma(a,b) \geq |\Phi(a)-\Phi(b)|\ge r+s-2t-2D-2E$.
\ep

Recall that the \emph{convex hull} $\Hull(A)$ of a subset $A$ of a CAT(0) cube complex $Q$ is the smallest convex subcomplex of $Q$ containing $A$. Equivalently, it is the smallest subcomplex that contains $A$ and is \emph{median convex}, in the sense that $\mu(q,a,b)\in\Hull(A)$ whenever $a,b\in\Hull(A)$. We regard a subset of $Q^{(0)}$ as convex if the full subcomplex spanned by it is convex. We need the following iterative description convex hulls in CAT(0) cube complexes. 

\blem \label{lem:iterated_median}
Let $Q$ be a CAT(0) cube complex of dimension at most $\nu$, and let $\mu_Q : {Q^{(0)}}^3 \ra Q^{(0)}$ denote the median. Given $A \subset Q^{(0)}$, set $A_0=A$, and for each $i \in \N$, let
\[A_{i+1} = \mu_Q(Q^{(0)},A_i,A_i) = \{\mu_Q(x,a,b) \st a \in A_i, b \in A_i, x \in Q^{(0)}\}.\]
Then $A_{\nu'} = \Hull(A)$, where $\nu'=\max\{1,\nu-1\}$.
\elem

Note that the constant $\nu'$ is probably far from optimal. However, $\nu'$ does depend on $A$ and $Q$. For example, if $A$ is the star of a vertex in a $\nu$--cube, then it can be seen that the optimal value of $\nu'$ is $\lceil\log_2\nu\rceil$ in this case. 

\bp

The result is trivial if $A$ is convex. Otherwise, fix $x\in\Hull(A)\smallsetminus A$, and let $\mathcal H$ be the collection of hyperplanes of $\Hull(A)$ that are adjacent to $x$. For each $H\in\mathcal H$, let $Q^{(0)}=H^+\sqcup H^-$ denote the partition defined by $H$, where $x\in H^+$. Let $\{H_1,\dots,H_n\}$ be a maximal pairwise crossing family in $\mathcal H$. We have $n\le\nu$. For each $i$, let $\mathcal H_i$ denote the set of elements of $\mathcal H$ that are disjoint from $H_i$, together with $H_i$. An important observation is that $H_i^-\subset H^+$ whenever $H\in\mathcal H_i\smallsetminus\{H_i\}$. 

If $n=1$, then $x$ is a cut-point or leaf of $\Hull(A)$, so taking any $a\in A\cap H^+$, $b\in A\cap H^-$ gives $x=\mu(x,a,b)$, and we are done. 

So suppose that $n\ge2$. If for every $a\in A\cap H_1^-$ we have $a\in H_2^-$, then for every $b\in A\cap H_2^+$ we have $b\in H_1^+$, so if we take $z_1\in A\cap H_1^-$ and $z_2\in A\cap H_2^+$, then $\mu(x,z_1,z_2)\in H^+\cap H'^+$ for every $H\in\mathcal H_1$, $H'\in\mathcal H_2$. We can reason similarly if every element of $A\cap H_2^-$ lies in $H_1^-$. Otherwise there exist $z_1\in A\cap H_1^-\cap H_2^+$ and $z_2\in A\cap H_1^+\cap H_2^-$, and we again have $\mu(x,z_1,z_2)\in H^+\cap H'^+$ for every $H\in\mathcal H_1$, $H'\in\mathcal H_2$. Let $y_1=\mu(x,z_1,z_2)\in A_1$.

We proceed inductively. Suppose that we have $y_i\in A_i$ such that $y_i\in H^+$ for all $H\in\bigcup_{j\le i+1} \mathcal H_j$. Let $z_{i+2}$ be any point of $A$ that is separated from $y_i$ by $H_{i+2}$, and set $y_{i+1}=\mu(x,y_i,z_{i+2})$. Since $x,y_i\in H^+$ for every $H\in\bigcup_{j\le i+1} \mathcal H_j$, the same is true of $y_{i+1}$, and since $y_i$ and $z_{i+2}$ lie on opposite sides of $H_{i+2}$, we also have that $y_{i+1}\in H^+$ for all $H\in\mathcal H_{i+2}$.

By this procedure, we obtain $y_{n-1}\in A_{n-1}\cap\Hull(A)$ that is not separated from $x$ by any hyperplane of $\Hull(A)$, so we must have $y_{n-1}=x$.
\ep

In order to apply Lemma~\ref{lem:join_of_contractions}, we focus on contractions on CAT(0) cube complexes. Recall that a \emph{chain} of hyperplanes is an ordered sequence $(H_1,\dots,H_n)$ of pairwise disjoint hyperplanes such that $H_j$ separates $H_i$ from $H_k$ whenever $i<j<k$.

\blem \label{lem:approximation_contraction}
Let $Q$ be a CAT(0) cube complex of dimension at most $\nu$, and let $\Phi : Q^{(0)} \ra \R$ be a $K'$--quasi-median, $(K',K')$--coarsely Lipschitz map (for the $\ell^1$ metric) with bounded image. There exists an interval $[u,v]$ of $\Z$ and a chain $(H_n)_{u \leq n \leq v}$ of hyperplanes of $Q$ satisfying the following:
\bit
\item for each vertex $x$ in $Q$, there exists a unique $n=\Psi(x) \in [u-1,v]$ such that:
\bit
\item either $u \leq n \leq v-1$ and $x$ is between $H_n$ and $H_{n+1}$,
\item or $n=u-1$ and $H_u$ separates $x$ from $H_{u+1}$,
\item or $n=v$ and $H_v$ separates $x$ from $H_{v-1}$, and
\eit
\item for each vertex $x$ in $Q$, we have $|\Phi(x) - 4K'\nu\Psi(x)| \leq 4K'\nu$.
\eit
\elem

\bp
Fix $n \in \Z$, and consider $K_n=\Phi^{-1}((2An-A,2An])$, where $A=2K'\nu$. Since $Q^{(0)}$ is $1$--connected, we know that $\Phi(Q^{(0)})$ is $2K'$--connected. In particular, the set of integers $n \in \Z$ such that $K_n \neq \emptyset$ is an interval $[u-1,v]$. Furthermore, for each $u \leq n \leq v-1$, we know that $K_n$ disconnects $Q$.

In the notation of Lemma~\ref{lem:iterated_median} we have that, for all $i\ge0$, if $x \in (K_n)_i$, then $|\Phi(x)-\Phi(K_n)| \leq K'i$. Indeed, this is clear for $i=0$, and if $x\in(K_n)_{i+1}$, so that there exist $a,b\in(K_n)_i$ with $x=\mu_Q(x,a,b)$, then the fact that $\Phi$ is $K'$--quasimedian implies that $|\Phi(x) - \mu_\R(\Phi(x),\Phi(a),\Phi(b))|\le K'$, yielding the claimed inequality by induction. In particular, Lemma~\ref{lem:iterated_median} tells us that every $x\in\Hull(K_n)$ satisfies $|\Phi(x)-\Phi(K_n)|\le K'\nu$. 

As a consequence, if $n \neq m$, then the convex subcomplexes $\Hull(K_n)$ and $\Hull(K_m)$ are disjoint. Thus, for each $u \leq n \leq v$, there exists a hyperplane $H_n$ of $Q$ that separates $\Hull(K_{n-1})$ from $\Hull(K_n)$ \cite[Corollary~1]{chepoi:convexity}. 

For each vertex $x \in Q^{(0)}$, let $u-1 \leq n \leq v$ be such that $\Phi(x) \in (2An-2A,2An]$. Then $\Psi(x)$ is either equal to $n-1$ or to $n$. So $|\Phi(x) - 2A\Psi(x)| \leq 2A=4K'\nu$.
\ep

Before stating the next lemma, we remark that, given any chain $\mathcal H$ of hyperplanes in a finite CAT(0) cube complex $Q$, there is an associated map $Q^{(0)}\to\Z$: the cube complex dual to $\mathcal H$ is a finite interval of $\Z$, and each vertex of $Q$ determines a consistent orientation of the hyperplanes in $\mathcal H$. This is a special case of the \emph{restriction quotient} described in \cite{capracesageev:rank}, and it is clearly a median map. Conversely, any $0$--contraction on $Q$ can be realised as restriction quotient in this manner. Moreover, after a translation of $\Z$, we may assume that the codomain is contained in $\N$ if it is bounded.

\blem \label{lem:midpoint_in_CCC}
Let $Q$ be a finite CAT(0) cube complex of dimension at most $\nu$. Let ${\mathcal C}$ be a (necessarily finite up to translations of $\Z$) family of $0$--contractions on $Q$, i.e. each $\Psi \in {\mathcal C}$ is a map $Q^{(0)} \ra \N$ given by a chain $(H_{\Psi,1},\dots,H_{\Psi,n_\Psi})$ of hyperplanes of $Q$. Let $\sigma_{\mathcal C}$ denote the pseudometric on $Q^{(0)}$ defined by
\[\forall \alpha,\beta \in Q^{(0)}, \sigma_{\mathcal C}(\alpha,\beta) = \max_{\Psi \in {\mathcal C}} |\Psi(\alpha)-\Psi(\beta)|.\]
Then for each $\alpha,\beta \in Q^{(0)}$ and for each integer $0 \leq r \leq \sigma_{\mathcal C}(\alpha,\beta)$, there is a vertex $\gamma \in [\alpha,\beta]$ and contractions $\Psi_1,\Psi_2 \in {\mathcal C}$ such that the following hold.
\ben
\item $\sigma_{\mathcal C}(\alpha,\gamma)=r$,
\item $\sigma_{\mathcal C}(\alpha,\gamma)=|\Psi_1(\alpha)-\Psi_1(\gamma)|$ and $\sigma_{\mathcal C}(\gamma,\beta)=|\Psi_2(\gamma)-\Psi_2(\beta)|$,
\item if $(H_{1,1},\dots,H_{1,n_1})$ is the maximal subchain of hyperplanes defining $\Psi_1$ that separate $\alpha$ from $\gamma$, and $(H_{2,1},\dots,H_{2,n_2})$ is the maximal subchain of hyperplanes defining $\Psi_2$ that separate $\gamma$ from $\beta$, then $(H_{1,1},\dots,H_{1,n_1},H_{2,1},\dots,H_{2,n_2})$ is a chain. \label{item:disjoint_hyperplanes}
\een
\elem

\bp
Fix $\alpha,\beta \in Q^{(0)}$ and an integer $0 < r < \sigma_{\mathcal C}(\alpha,\beta)$. Since $\sigma_{\mathcal C}$ is $1$--Lipschitz with respect to the combinatorial distance on $Q^{(0)}$, we know that there exists $\gamma \in [\alpha,\beta]$ such that $\sigma_{\mathcal C}(\alpha,\gamma)=r$. Among all possible choices, choose such $\gamma$ as far away from $\alpha$ as possible, in the sense that:
\[ \mbox{if } \gamma' \in [\alpha,\beta] \mbox{ has } \sigma_{\mathcal C}(\alpha,\gamma')=r \mbox{ and } \gamma \in [\alpha,\gamma'], \mbox{ then } \gamma'=\gamma.\]

Let $\Psi_2 \in {\mathcal C}$ such that $\sigma_{\mathcal C}(\gamma,\beta)=|\Psi_2(\gamma)-\Psi_2(\beta)|$. Let $(H_{2,1},\dots,H_{2,n_2})$ be the maximal subchain of hyperplanes defining $\Psi_2$ that separate $\gamma$ from $\beta$, numbered from $\gamma$ to $\beta$.

Let $H$ be a hyperplane of $Q$ adjacent to $\gamma$ and either equal to $H_{2,1}$ or separating $\gamma$ from $H_{2,1}$, and let $\gamma' \in [\alpha,\beta]$ be the vertex adjacent to $\gamma$ such that $H$ crosses the edge $[\gamma,\gamma']$. First note that, since $H_{2,1}$ separates $\gamma$ and $\beta$, we deduce that $H$ separates $\gamma$ and $\beta$. Thus $H$ does not separate $\alpha$ and $\gamma$, because $\gamma \in [\alpha,\beta]$. In particular, $\gamma \in [\alpha,\gamma']$. Since $\gamma$ is chosen as far from $\alpha$ as possible among points at $\sigma_{\mathcal C}$--distance equal to $r$, and every hyperplane separating $\alpha$ and $\gamma$ separates $\alpha$ and $\gamma'$, we deduce that $\sigma_{\mathcal C}(\alpha,\gamma') > \sigma_{\mathcal C}(\alpha,\gamma)=r$, so $\sigma_{\mathcal C}(\alpha,\gamma')=\sigma_{\mathcal C}(\alpha,\gamma)+1$.

Let $\Psi_1 \in {\mathcal C}$ such that $\sigma_{\mathcal C}(\alpha,\gamma')=|\Psi_1(\alpha)-\Psi_1(\gamma')|$. Let $(H_{1,1},\dots,H_{1,n_1+1})$ be the maximal subchain of hyperplanes defining $\Psi_1$ that separate $\alpha$ from $\gamma'$, numbered from $\alpha$ to $\gamma'$. Since $\sigma_{\mathcal C}(\alpha,\gamma')=\sigma_{\mathcal C}(\alpha,\gamma)+1$, we know that $H=H_{1,n_1+1}$ and that $\sigma_{\mathcal C}(\alpha,\gamma)=|\Psi_1(\alpha)-\Psi_1(\gamma)|$. In particular, $H$ is disjoint from $H_{1,1},\dots,H_{1,n_1}$. We deduce that $H$ separates $H_{1,1},\dots,H_{1,n_1}$ from $H_{2,1},\dots,H_{2,n_2}$, and the conclusion follows. 
\ep

We can now use these lemmas to prove that, in the setting of Theorem~\ref{thm:properties_of_sigma}, the metric $\sigma$ is weakly roughly geodesic (Definition~\ref{def:wrg}).

\bpro\label{prop:coarsely_geodesic} If $(X,\mu,d)$ has quasicubical intervals and is roughly geodesic, then $\sigma$ is weakly roughly geodesic.
\epro

\bp
Let $a,b\in X$. Since $X$ has quasicubical intervals, there exists a finite CAT(0) cube complex $Q$ (with the $\ell^1$ metric) of dimension at most $\nu$, and a map $\lambda : Q \ra [a,b]$ that is a $(\kappa ,\kappa)$--quasi-isometry and a $\kappa $--quasi-median homomorphism. We can therefore fix $\alpha,\beta \in Q$ such that $d(\lambda(\alpha),a) \leq \kappa $ and $d(\lambda(\beta),b) \leq \kappa $. According to Proposition~\ref{prop:bilipschitzmetrics}, there is a constant $q\ge1$ such that $d$ and $\sigma$ are $(q,q)$--quasi-isometric. It follows that $\sigma(\lambda(\alpha),a)\le q(\kappa+1)$ and $\sigma(\lambda(\beta),b)\le q(\kappa+1)$.

For each $K$--contraction $\Phi : X \ra \R$, the composition $\Phi\lambda : Q \ra \R$ is a $K'$--quasi-median, $(K',K')$--coarsely Lipschitz map, where $K'=K+\kappa $. According to Lemma~\ref{lem:approximation_contraction}, there exists a $0$--contraction $\Psi : Q \ra \Z$ such that $|\Phi\lambda(\xi) - 4K'\nu \Psi(\xi)| \leq 4K'\nu$ for all $\xi \in Q^{(0)}$. Let ${\mathcal C}$ denote the set of all $0$--contractions $\Psi : Q \ra \Z$ such that there is some $K$--contraction $\Phi : X \ra \Z$ with $|\Phi\lambda(\xi) - 4K'\nu \Psi(\xi)| \leq 4K'\nu$ for all $\xi \in Q^{(0)}$.

\mk

We shall prove that $\sigma$ is weakly roughly geodesic with constant
\[C'_\sigma =64K'\nu + 4q(\kappa +1) + 4\kappa + 4K + 2D,\]
where $D$ is the constant from Lemma~\ref{lem:join_of_contractions}.

\mk

Let $r\in[0,\sigma(a,b)]$. If $r<C'_\sigma$, then clearly we can take $c=a$ for the desired point. Similarly, if $r>\sigma(a,b)-C'_\sigma$, then we can take $c=b$. Otherwise, Lemma~\ref{lem:midpoint_in_CCC} applied to $\alpha$, $\beta$, the family ${\mathcal C}$, and $r'=\lfloor \frac{r}{4K'\nu} \rfloor$ provides a vertex $\gamma \in [\alpha,\beta]$ and $0$--contractions $\Psi_1,\Psi_2 \in {\mathcal C}$. Let $c=\lambda(\gamma) \in [a,b]$.

\mk

Let us start by computing $\sigma(a,c)$. By definition of the set $\mathcal C$, for any $K$--contraction $\Phi:X\to\R$ there is some $\Psi\in\mathcal C$ (and vice versa: for any $\Psi\in\mathcal C$ there exists a $K$--contraction $\Phi$) such that 
\begin{align*}
\big||\Phi\lambda(\xi)-\Phi\lambda(\zeta)| &- 4K'\nu|\Psi(\xi)-\Psi(\zeta)|\big| \\
\le\hspace{2mm} & \big||\Phi\lambda(\xi)-4K'\nu\Psi(\xi)| + |\Phi\lambda(\zeta)-4K'\nu\Psi(\zeta)|\big| 
        \hspace{2mm}\le\hspace{2mm} 8K'\nu 
\end{align*}
holds for all $\xi,\zeta\in Q^{(0)}$. It follows that 
\begin{align}
|\sigma(\lambda(\xi),\lambda(\zeta)) - 4K'\nu\sigma_{\mathcal C}(\xi,\zeta)| \leq 8K'\nu. \label{eqn:sigma_is_sigma_C}
\end{align}
By the choice of $\gamma$, we have $\sigma_{\mathcal C}(\alpha,\gamma)=r'$. Thus, from \eqref{eqn:sigma_is_sigma_C} we obtain
\begin{align*}
|\sigma(a,c)-r| &\le |\sigma(\lambda(\alpha),\lambda(\gamma))-4K'\nu r'| + q(\kappa+1) + 4K'\nu \nonumber \\
    &\le 12K'\nu +q(\kappa+1) \hspace{2mm}\le\hspace{2mm} C'_\sigma. 
\end{align*}

The aim for the rest of the proof is to confirm the second restriction on $c$, that $\sigma(a,c)+\sigma(c,b)\le\sigma(a,b)+C'_\sigma$. The strategy is to apply Lemma~\ref{lem:join_of_contractions}. 

\mk

Recall that $\Psi_1,\Psi_2 \in {\mathcal C}$ are the $0$--contractions provided by Lemma~\ref{lem:midpoint_in_CCC}: they satisfy $\sigma_{\mathcal C}(\alpha,\gamma)=|\Psi_1(\alpha)-\Psi_1(\gamma)|=r'$ and $\sigma_{\mathcal C}(\gamma,\beta)=|\Psi_2(\gamma)-\Psi_2(\beta)|=s'$. After translations of $\Z$, we may also assume that $\Psi_1(\alpha)=0$, $\Psi_1(\gamma)=\Psi_2(\gamma)=r'$, and $\Psi_2(\beta)=r'+s'$. By definition of ${\mathcal C}$, there exist $K$--contractions $\Phi_1$ and $\Phi_2$ on $X$ such that $|\Phi_1\lambda(\xi) - 4K'\nu \Psi_1(\xi)| \leq 4K'\nu$ and $|\Phi_2\lambda(\xi) - 4K'\nu \Psi_2(\xi)| \leq 4K'\nu$ for all $\xi \in Q^{(0)}$. In particular, $\Phi_1(a) \le \Phi_1\lambda(\alpha)+\kappa+K \le 4K'\nu+\kappa+K$. Moreover, by using \eqref{eqn:sigma_is_sigma_C} we see that 
\begin{align*}
\Phi_2(b)&\ge 4K'\nu(r'+s')-4K'\nu-\kappa-K \\
    &= 4K'\nu(\sigma_{\mathcal C}(\alpha,\gamma)+\sigma_{\mathcal C}(\gamma,\beta))-4K'\nu-\kappa-K \\
    &\ge \sigma(\lambda(\alpha),\lambda(\gamma))+\sigma(\lambda(\gamma),\lambda(\beta)) 
        -16K'\nu - 4K'\nu -\kappa-K \\
    &\ge \sigma(a,c)+\sigma(c,b) - 20K'\nu -2q(\kappa+1)-\kappa-K.
\end{align*}

We are now in the setting of Lemma~\ref{lem:join_of_contractions}, with $E=20K'\nu+2q(\kappa+1)+\kappa+K$ and with the image of $\Phi_2$ being bounded above by $\sigma(a,c)+\sigma(c,b)$. Let us show that the assumptions of the lemma are met if we take $t= 12K'\nu+\kappa+K$. 

\mk

We must first note that $r-D+K-E\ge C'_\sigma-D+K-E\ge t$, and secondly that $\sigma(a,c)+\sigma(c,b)-r-D+K-E \ge\sigma(a,b)-r-D+K-E \ge C'_\sigma-D+K-E \ge t$.

It remains to prove that the subspaces $Z_1 = \{z\in X \st \Phi_1(z) \leq r-t-K\}$ and $Z_2=\{z\in X \st \Phi_2(z) \geq r+t+K\}$ are disjoint.
Fix $z \in X$, let $x=\mu(z,a,b)$, and pick any $\xi \in Q^{(0)}$ such that $d(\lambda(\xi),x) \leq \kappa $.

If $z\in Z_1$, so that $\Phi_1(x) \leq \Phi_1(z)+K\le r-t$, then $\Phi_1(\lambda(\xi)) \leq r-t+\kappa +K$, and hence $\Psi_1(\xi) \leq \f{r-t+\kappa +K+4K'\nu}{4K'\nu} \leq \f{r-8K'\nu}{4K'\nu} \leq r'-1$. Similarly, if $z\in Z_2$, then $\Psi_2(\xi) \geq r'+1$. 

According to Property~\eqref{item:disjoint_hyperplanes} of Lemma~\ref{lem:midpoint_in_CCC}, the halfspace of $H_{\Psi_1,r'}$ containing $\alpha$ is disjoint from the halfspace of $H_{\Psi_2,1}$ containing $\beta$. Thus, if $\xi \in Q^{(0)}$, then we cannot simultaneously have both $\Psi_1(\xi) \leq r'-1$ and $\Psi_2(\xi) \geq r'+1$. As a consequence, we cannot have both $z \in Z_1$ and $x \in Z_2$. This implies that $Z_1 \cap Z_2 = \emptyset$.

\mk

The conditions of Lemma~\ref{lem:join_of_contractions} are therefore met, and by applying it we deduce that $\sigma(a,b) \ge \sigma(a,c)+\sigma(c,b)-2t-2D-2E = \sigma(a,c)+\sigma(c,b)-C'_\sigma$. This completes the proof that $\sigma$ is weakly roughly geodesic with constant $C'_\sigma$.
\ep

\subsection{Coarse convexity of balls} ~

To complete the proof of Theorem~\ref{thm:properties_of_sigma}, it remains to show that balls in $(X,\sigma)$ are uniformly coarsely median-convex.

\blem \label{lem:intervals_don't_move}
There is a constant $\epsilon \geq 0$ such that for any $x,y,z\in X$ with $x\in[y,z]$, we have $d(x,\mu(x,y,z))\le \epsilon$.
\elem

\bp
According to \cite[Lemma~8.1]{bowditch_quasiflats}, there are constants $r_0$ and $r_0'$ such that $x$ lies at distance at most $r_0'$ from a point $x'$ with $d(x',\mu(x',y,z))\le r_0$. Since the coarse median $\mu$ is coarsely Lipschitz, we have $d(\mu(x,y,z),\mu(x',y,z)) \leq h(0)d(x,x')+h(0) \le h(0)(r'_0+1)$. We deduce that $d(x,\mu(x,y,z)) \le \epsilon$, where $\epsilon = r'_0+r_0+h(0)(r'_0+1)$.
\ep

\blem \label{lem:balls_median_convex}
Suppose that $(X,\mu,d)$ has quasicubical intervals and is roughly geodesic. There is a constant $M$ such that each ball in $(X,\sigma)$ is $M$--coarsely median-convex.
\elem

\bp
Fix $w \in X$ and $R \geq 0$. Let $y,z \in B_\sigma(w,R)$. Given any $a \in X$, we want to bound the distance from $x=\mu(a,y,z)$ to $B_\sigma(w,R)$.

Let $r<\sigma(w,x)$, and let $\Phi : X \ra [0,r]$ be a $K$--contraction such that $\Phi(w)=0$ and $\Phi(x)\ge r$. Lemma~\ref{lem:intervals_don't_move} tells us that $d(\mu_{x,y,z},x) \leq \epsilon$, so $|\Phi(\mu_{x,y,z})-\Phi(x)|\le\epsilon+K$. Since $\Phi$ is a $K$--quasi-median homomorphism, we have 
\begin{align*}
\mu_\R(\Phi(x),\Phi(y),\Phi(z)) 
    &\ge \Phi(\mu_{x,y,z})-K \\
    &\ge \Phi(x)-\epsilon-2K \hspace{2mm}\ge\hspace{2mm} r-\epsilon-2K.
\end{align*} 
This means that one of $\Phi(y)$ and $\Phi(z)$ must be at least $r-\epsilon-2K$, and so $\sigma(w,x) \leq \max\{\sigma(w,y),\sigma(w,z)\} + \epsilon+2K$.

This proves that $x \in B_\sigma(w,R+\epsilon+2K)$. According to Proposition~\ref{prop:coarsely_geodesic}, $\sigma$ is weakly roughly geodesic with constant $C'_\sigma$. Applying Definition~\ref{def:wrg} with $a=w$, $b=x$, and $r=\min\{R-C'_\sigma,\sigma(w,x)\}$, yields a point $x'\in B_\sigma(w,R)$ with $d(x',x)\le\epsilon+2K+3C'_\sigma=M$, which shows that balls in $(X,\sigma)$ are $M$--coarsely median-convex.
\ep

\section{Quasiconvexity and a coarse Helly property in HHSs} \label{section:hhs}

The goal of this section is to prove that \emph{hierarchically quasiconvex} subsets of hierarchically hyperbolic spaces satisfy a coarse version of the Helly property. Since coarsely median-convex subsets of a hierarchically hyperbolic space are hierarchically quasiconvex \cite[Proposition~5.11]{hhs_convexity}, this applies in particular to balls for the metric $\sigma$ constructed in Section~\ref{section:newmetric}, by Theorem~\ref{thm:properties_of_sigma}, allowing us to deduce Theorem~\ref{thm:hhs_coarsely_helly}. We also deduce the bounded packing property for hierarchically quasiconvex subgroups of groups that are HHSs.

\subsection{Background on hierarchical hyperbolicity} \label{subsection:hhsbackground} ~

Here we give a description of hierarchically hyperbolic spaces (HHSs) and hierarchically hyperbolic groups (HHGs). For full definitions, see \cite[Def.~1.1, 1.21]{behrstockhagensisto:hierarchically:2}. Briefly, an HHS consists of a quasigeodesic space $(X,d)$, a constant $E$, and a set $\s$, elements of which are called \emph{domains}. Each domain $U$ has an associated $E$--hyperbolic space $\mcC U$, and the various axioms give structure for extracting information about $X$ from these hyperbolic spaces. This includes:
\begin{shortitem}
\item 	Each domain $U$ has an associated $E$--coarsely onto, $(E,E)$--coarsely Lipschitz \emph{projection} map $\pi_U:X\to\mcC U$.
\item 	$\s$ has a partial order $\nest$, called nesting, and a symmetric relation $\bot$, called orthogonality. If $U\nest V$ and $V\bot W$, then $U\bot W$. The relations $\pnest$, $\bot$, and $=$ are mutually exclusive, and their complement, denoted $\trans$, is called transversality.
\item 	There is a bound on the size of $\pnest$--chains and pairwise orthogonal sets. 
\item 	If $U\pnest V$ or $U\trans V$ then there is a set $\rho^U_V\subset\mcC V$ of diameter at most $E$. 
\item 	If $U\pnest V$ then there is also a map $\rho^V_U:\mcC V\to\mcC U$. If $\gamma\subset\mcC V$ is a geodesic and $d_{\mcC V}(\gamma,\rho^U_V)>E$, then $\diam\rho^V_U(\gamma)\leq E$. 
\end{shortitem}

This last point is referred to as \emph{bounded geodesic image}. For $x,y\in X$, it is standard to write $d_U(x,y)$ in place of $d_{\mcC U}(\pi_U(x),\pi_U(y))$, and similarly for subsets of $X$. Moreover, we can always assume that $X$ and the associated hyperbolic spaces are graphs (for example by \cite[Lemma~3.B.6]{cornulierdelaharpe:metric}). In particular, we can and shall assume that $X$ and the $\mcC U$ are geodesic.

We say that $X$ admits an HHS structure if there is an HHS whose underlying metric space is $X$, and we write $(X,\s)$ as shorthand for the entirety of a choice of HHS structure. An HHG is a finitely generated group $G$ whose Cayley graph admits an HHS structure $(G,\s)$ such that $G$ acts cofinitely on $\s$ and elements of $G$ induce isometries $\mcC U\to\mcC gU$ for all $U\in\s$. (There are a couple of other natural regulatory assumptions that we shall not concern ourselves with here.)

\mk

The idea behind two domains being orthogonal is that one can see a direct product of associated sub-HHSs inside $X$. This is made precise by the \emph{partial realisation} axiom.

\begin{axiom}[Partial realisation]
If $\{U_i\}$ is a set of pairwise orthogonal domains, then for any choice of points $p_i\in\mcC U_i$, there is some $x\in X$ with $d_{U_i}(x,p_i)\le E$ for all $i$, and with $d_V(x,\rho^{U_i}_V)\le E$ whenever $U_i\pnest V$ or $U_i\trans V$.
\end{axiom}

In fact, one of the main tools for dealing with HHSs is the \emph{realisation theorem} \cite[Theorem~3.1]{behrstockhagensisto:hierarchically:2}, which extends the partial realisation axiom. Roughly, it says that any \emph{consistent tuple} is well-approximated by the projections of some point in $X$. In other words, performing constructions in $X$ can be reduced to performing constructions in the associated hyperbolic spaces and checking that the points produced by this process are consistent.

\begin{defi}[Consistent tuple]
For a constant $\kappa\geq E$, a tuple $(b_U)\in\prod_{U\in\s}\mcC U$ is said to be $\kappa$--consistent if 
\[
\min\big\{d_U(b_U,\rho^V_U),d_V(b_V,\rho^U_V)\big\}\leq\kappa \hspace{2mm}\text{ whenever } U\trans V, \text{ and}
\]\[
\min\big\{d_V(b_V,\rho^U_V),\diam(b_U\cup\rho^V_U(b_V))\big\}\leq\kappa \hspace{2mm}\text{ whenever } U\pnest V.
\]
\end{defi}

\begin{axiom}[Consistency]
For any $x\in X$, the tuple $(\pi_U(x))_{U\in\s}$ is $E$--consistent. 
\end{axiom}

It will be useful to be able to talk about consistency for subsets of $\s$. Given $u\in\mcC U$ and $v\in\mcC V$, we say that $u$ and $v$ \emph{satisfy the consistency inequalities} for $U$ and $V$ if 

\begin{shortitem}
\item 	$U\trans V$ and $\min\big\{d_U(u,\rho^V_U),d_V(v,\rho^U_V)\big\}\le E$, or
\item 	(after relabelling) $U\pnest V$ and $\min\big\{d_V(v,\rho^U_V),\diam(\{u\}\cup\rho^V_U(v))\big\} \le E$.
\end{shortitem}

Let us now state the realisation theorem, which will be the mechanism for our proof of Theorem~\ref{thm:hellyhqc}. We shall only need the existence part.

\begin{thm}[Realisation, {\cite[Theorem~3.1]{behrstockhagensisto:hierarchically:2}}] \label{thm:realisation}
For each $\kappa\ge E$, there are numbers $\theta_e(\kappa)$ and $\theta_u(\kappa)$ such that, if $(b_U)_{U\in\s}$ is a $\kappa$--consistent tuple, then there is some $x\in X$ with $d_U(x,b_U)\le\theta_e(\kappa)$ for all domains $U$. Moreover, the set of such $x$ has diameter at most $\theta_u(\kappa)$.
\end{thm}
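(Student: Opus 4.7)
The plan is to prove both halves by a hierarchical induction keyed on the complexity of $\s$, by which I mean the maximal length of a $\pnest$-chain, which is uniformly bounded by one of the HHS axioms. The existence part requires constructing a point $x \in X$; the uniqueness part then follows from the distance formula (either assumed as a companion theorem, or derived in the same induction from the existence claim).

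For existence, I would proceed by building up a candidate point through successive approximations, one maximal domain at a time. Enumerate the $\nest$-maximal domains $V_1, \dots, V_k$ (of which there are boundedly many by the bound on pairwise orthogonal families). Apply the partial realisation axiom to the pairwise orthogonal subcollection of maximal domains to obtain a first approximation $x_0$ with $d_{V_i}(x_0, b_{V_i}) \le E$ for each $i$, and with $d_W(x_0, \rho^{V_i}_W) \le E$ whenever $V_i \pnest W$ or $V_i \trans W$. The $\kappa$-consistency of the tuple then forces $d_W(x_0, b_W)$ to be small at every $W$ that is transverse to some $V_i$ with $d_W(b_W, \rho^{V_i}_W) > \kappa$. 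One then descends: for each $U \pnest V_i$, either the geodesic from $\pi_{V_i}(x_0)$ to $b_{V_i}$ stays far from $\rho^U_{V_i}$, and bounded geodesic image forces $d_U(x_0,b_U)$ to be small, or else the geodesic passes near $\rho^U_{V_i}$, in which case $U$ becomes a new maximal domain in a strictly lower-complexity sub-HHS whose domains are those $W$ with $W \nest U$ or $W \bot U$. Induct on this sub-HHS with the restricted tuple (whose consistency constant has increased by a controlled amount) to obtain a correction. Merging the corrections coming from different $U$'s uses partial realisation again, applied to pairwise orthogonal collections of such sub-domains.

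For uniqueness, suppose both $x$ and $x'$ satisfy $d_U(x,b_U), d_U(x',b_U) \le \theta_e$ for every domain $U$. Then $d_U(x,x') \le 2\theta_e$ for all $U$. Applying the distance formula with any threshold $s > 2\theta_e$ gives
\[ d(x,x') \le A \sum_{U \in \s} \{\{ d_U(x,x') \}\}_s + B = B, \]
where $\{\{t\}\}_s = t$ if $t \ge s$ and $0$ otherwise. Setting $\theta_u = B$ gives the claimed uniform diameter bound.

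The main obstacle is tracking the accumulation of error constants through the inductive descent. Each time we pass to a sub-HHS nested beneath some $U$, the relevant consistency threshold degrades from $\kappa$ to some $\kappa' = \kappa'(\kappa, E)$, and the realisation error on the correction compounds with the error of the outer approximation. One must verify that, because the depth of recursion is at most the complexity of $\s$ and the number of corrections at each level is bounded by the orthogonality rank, the total error depends only on $\kappa$ and on the HHS data, not on the size of the tuple; this boils down to a careful quantitative bookkeeping in which bounded geodesic image is the essential mechanism that makes the recursion actually terminate and the constants remain controlled.
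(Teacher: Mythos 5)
This statement is not proved in the paper at all: it is imported verbatim from Behrstock--Hagen--Sisto \cite[Theorem~3.1]{behrstockhagensisto:hierarchically:2}, so the only meaningful comparison is with that proof. For the existence half, your plan follows the same broad lines as theirs---induction on complexity, partial realisation applied to pairwise-orthogonal families, and consistency together with bounded geodesic image to control the domains not yet handled---but it remains a plan: the step you defer (``merging the corrections'' and verifying that the accumulated constants depend only on $\kappa$ and the HHS constants) is precisely where the content of the BHS argument lies, so as written the existence half is an outline rather than a proof. A smaller point: an HHS has a unique $\nest$-maximal domain, and in general the bound on pairwise-orthogonal families neither bounds the number of $\nest$-maximal elements nor forces them to be pairwise orthogonal (two maximal domains could a priori be transverse), so your opening application of partial realisation to ``the maximal domains'' is not justified by the axioms you cite; with the unique maximal domain $S$, the first approximation comes simply from coarse surjectivity of $\pi_S$.

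The genuine gap is in the uniqueness half. The distance formula is not available to you here: in the standard development it is proved after, and using, the realisation theorem (realisation is Theorem~3.1 and the distance formula Theorem~4.5 of the cited paper), so invoking it as a ``companion theorem'' is circular, and the alternative you offer---that it could be ``derived in the same induction from the existence claim''---is a substantial unproved assertion, since its proof requires hierarchy paths and a passing-up argument, not merely realisation. What the ``moreover'' clause actually rests on is the uniqueness axiom of the HHS definition, which this paper's background summary omits but which is part of the full definition: for every $\kappa'$ there is $\theta_u(\kappa')$ such that $d_U(x,x')\le\kappa'$ for all $U\in\s$ implies $d(x,x')\le\theta_u(\kappa')$. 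Given two realisation points $x,x'$ one has $d_U(x,x')\le 2\theta_e(\kappa)$ for every domain $U$, and the uniqueness axiom concludes immediately; no distance formula is needed, and this is the only non-circular route at this stage of the theory.
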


A key application of the realisation theorem is for the construction of a coarse median operation for HHSs. Given three points $x,y,z$ in an HHS $(X,\s)$, let $(m_U)_{U\in\s}$ be the tuple whose $U$--entry is the median of the triple $\pi_U(x),\pi_U(y),\pi_U(z)$ in the hyperbolic space $\mcC U$. This tuple is consistent \cite[Theorem~7.3]{behrstockhagensisto:hierarchically:2}, so we define $\mu(x,y,z)$ to be a point obtained by applying the realisation theorem to the tuple $(m_U)$. (One also needs a proposition of Bowditch \cite[Proposition~10.1]{bowditch_coarse_median} to conclude that $(X,\mu,d)$ is a coarse median space.) When $X$ is an HHG, one can arrange for $\mu$ to be equivariant.

\mk

The action on the index set is what distinguishes HHGs from groups that are HHSs, and this turns out to be an important distinction. For example, the property of being an HHS is invariant under quasi-isometries, but there are groups that are virtually HHGs but not HHGs themselves. Indeed, the $(3,3,3)$ triangle group is virtually abelian, but, as mentioned in the introduction, it is not coarsely injective \cite{hoda:crystallographic}, and it therefore cannot be an HHG by Corollary~\ref{cor:list}. A more direct proof not relying on the results of this paper is given in \cite{petytspriano:unbounded}. On the other hand, any group that is an HHS can be equipped with a coarse median \cite{behrstockhagensisto:hierarchically:2}, but this is may fail to be equivariant if the structure is only an HHS structure.

A related notion that is closed under taking subgroups is that of a group that acts on an HHS $(X,\s)$ by \emph{HHS automorphisms}. In other words, it acts on $X$ isometrically, and on $\s$ with the regulatory assumptions alluded to above, but the action on $\s$ need not be cofinite. The median is still equivariant for such actions. 

\mk

In the theory of hyperbolic spaces, an important class of subsets are the quasiconvex subsets, because they inherit the structure of the ambient space. The natural analogue in the setting of hierarchical hyperbolicity is that of a \emph{hierarchically quasiconvex} subset.

\begin{defi}[Hierarchical quasiconvexity]
A subset $Y$ of an HHS $(X,\s)$ is said to be hierarchically quasiconvex if there is a function $k$ such that: every $\pi_U(Y)$ is $k(0)$--quasiconvex; and if $x\in X$ has $d_U(x,Y)\leq r$ for all $U\in\s$, then $d_X(x,Y)\leq k(r)$. 
\end{defi}

We finish this section with some examples.

All hyperbolic groups are hierarchically hyperbolic, as are the (extended) mapping class groups of finite type surfaces \cite{behrstockhagensisto:hierarchically:1}; Teichm\"uller space with either of the standard metrics \cite{behrstockhagensisto:hierarchically:1}; many graphs defined from curves on surfaces, including the pants graph \cite{vokes:hierarchical}; quotients of mapping class groups by powers of pseudo-Anosovs \cite{behrstockhagensisto:asymptotic} and Dehn-twist subgroups \cite{behrstockhagenmartinsisto:combinatorial}; extensions of Veech groups \cite{dowdalldurhamleiningersisto:extensions}; the genus-two handlebody group \cite{miller:stable}; fundamental groups of closed 3--manifolds without $\Nil$ or $\Sol$ components \cite{behrstockhagensisto:hierarchically:2}; right angled Artin groups \cite{behrstockhagensisto:hierarchically:1}; and in fact all known cubical groups \cite{hagensusse:onhierarchical}. Aside from the extensions of Veech groups and some 3--manifold groups, the groups listed here are all known to be HHGs, not merely HHSs.

There are also various ways to combine HHSs and HHGs to produce new ones. For example, both classes are closed under relative hyperbolicity \cite{behrstockhagensisto:hierarchically:2}; any graph product of HHGs is an HHG \cite{berlynerussell:hierarchical}; and many graphs of groups are HHGs \cite{behrstockhagensisto:hierarchically:2,berlairobbio:refined,robbiospriano:hierarchical}.

\subsection{Coarse injectivity} ~

Here we prove our result on hierarchically quasiconvex subsets of an HHS and deduce that HHSs are coarsely injective when equipped with the metric $\sigma$ from Section~\ref{section:newmetric}. We then deduce that every HHG acts properly cocompactly by isometries on a coarsely injective space.

We shall make use of the following powerful result for hyperbolic spaces. The version stated here is a combination of \cite[Lemma~5.1]{chepoidraganvaxes:core} and the proof of \cite[Theorem~5.1]{chepoidraganvaxes:core}. It states in particular that quasiconvex subsets of a hyperbolic graph satisfy a coarse version of the Helly property. Throughout this section, we say that subsets $Z_1$ and $Z_2$ of a metric space $(X,d)$ are $r$--close if there exist $z_1\in Z_1$ and $z_2\in Z_2$ with $d(z_1,z_2)\le r$.

\begin{thm}[{\cite{chepoidraganvaxes:core}}] \label{thm:cdvhelly}
The following holds for any nonnegative constants $E$, $r$, and $k_0$. Let $Y$ be an $E$--hyperbolic graph and let $y$ be a vertex of $Y$. Suppose that $\mathcal Q$ is a collection of pairwise $2Er$--close $k_0$--quasiconvex subsets of $Y^{(0)}$ with the property that $\{d(y,Q):Q\in\mathcal Q\}$ is bounded. By discreteness, we can fix $Q\in\mathcal Q$ with $d(y,Q)$ maximal. Let $z\in Q$ have $d(y,z)=d(y,Q)$, and let $c$ be the point on a geodesic $[y,z]$ with $d(c,z)=\min\{Er,d(y,z)\}$. Then $d(c,Q')\leq r'$ for all $Q'\in\mathcal Q$, where $r'=\max\{2k_0+5E,Er+k_0+3E\}$.
\end{thm}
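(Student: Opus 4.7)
My plan is to split the argument on whether $d(y,z)\le Er$. If $d(y,z)\le Er$, then by construction $c=y$, and for every $Q'\in\mathcal Q$ the maximality of $d(y,Q)$ gives $d(c,Q')=d(y,Q')\le d(y,Q)=d(y,z)\le Er\le r'$, which takes care of the first regime and explains the appearance of $Er$ in the constant $Er+k_0+3E$.

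For the interesting case $d(y,z)>Er$, so that $d(c,z)=Er$, I would fix an arbitrary $Q'\in\mathcal Q$ and use the pairwise $2Er$-closeness to choose $q\in Q$ and $q'\in Q'$ with $d(q,q')\le 2Er$. The plan is then to chain together three $E$-thin triangle comparisons to transport $c$ to a point near $Q'$. First, because $z$ realises $d(y,Q)$ and $Q$ is $k_0$-quasiconvex, a standard hyperbolic estimate shows that any geodesic $[y,q]$ passes within $O(k_0+E)$ of $z$, so the Gromov product $(z\cdot q)_y$ is at least $d(y,z)-O(k_0+E)$; applying $E$-thinness to the triangle $(y,z,q)$ yields a point $c_q\in[y,q]$ with $d(c,c_q)\le O(k_0+E)$ and $d(c_q,q)\approx Er$. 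Second, $E$-thinness of $(y,q,q')$, combined with $d(q,q')\le 2Er$, transfers $c_q$ to a nearby point $c'\in[y,q']$: since $c_q$ is at distance about $Er$ from $q$, it lies on the portion of $[y,q]$ that fellow-travels $[y,q']$ rather than the short segment $[q,q']$. Third, to land inside the neighbourhood of $Q'$ rather than only near the geodesic $[y,q']$, I would invoke the auxiliary point $p'\in Q'$ realising $d(y,Q')$ — so $d(y,p')\le d(y,z)$ by maximality — and argue via quasiconvexity of $Q'$ (the segment $[p',q']$ hugs $N_{k_0}(Q')$) together with one more thin-triangle comparison to find a point of $Q'$ within $k_0+O(E)$ of $c'$.

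The hard part is the bookkeeping that produces precisely $\max\{2k_0+5E,Er+k_0+3E\}$, and in particular showing that the bound is independent of $|\mathcal Q|$ (which it must be, since $c$ is defined once and for all using only $Q$ and $y$). The delicate point is that $c$ must be genuinely interior to $[y,z]$ — separated from $z$ by exactly $Er$ — so that when we push $c$ across to $[y,q']$, we are neither at the endpoint $q'$ (where the projection to $Q'$ is clean but we have lost too much distance) nor too close to $y$ (where we would no longer see $Q'$ at all). Choosing $d(c,z)=\min\{Er,d(y,z)\}$ is exactly the calibration that makes the two thin-triangle comparisons chain together and keeps $c'$ in the part of $[y,q']$ controlled by quasiconvexity. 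The constant $2k_0+5E$ covers the regime where $Er$ is too small to dominate the quasiconvexity/hyperbolicity defects and the first term in the maximum kicks in.
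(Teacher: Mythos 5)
First, a remark on provenance: the paper does not prove this statement at all --- it is quoted from Chepoi--Dragan--Vax\`es (as a combination of their Lemma~5.1 and the proof of their Theorem~5.1) --- so there is no in-paper argument to compare against, and your proposal has to stand on its own. Your overall plan (handle $d(y,z)\le Er$ trivially via maximality; otherwise use that a nearest point plus quasiconvexity forces $[y,q]$ to pass within about $k_0+O(E)$ of $z$, then chain thin-triangle comparisons through $q,q'$ and a nearest point of $Q'$) is the natural and essentially correct strategy, and your first case is complete.

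However, the second step as written contains a claim that is false in general, and the argument would break there. You assert that $d(c_q,q)\approx Er$ and that this forces $c_q$ onto the portion of $[y,q]$ fellow-travelling $[y,q']$. In fact $Er$ is only a lower bound for $d(c_q,q)$ (it can be arbitrarily large if $q$ is a far point of $Q$), and, more importantly, the fellow-travelling portion of $[y,q]$ along $[q,q']$ near $q$ has length up to $(y\cdot q')_q$, which can be as large as $d(q,q')\le 2Er$. So when $q'$ lies roughly between $y$ and $q$ with $d(q,q')$ close to $2Er$ and $d(y,q)$ close to $d(y,z)$ (take $Y=\R$, $y=0$, $Q=[10,20]$, $Q'=[6,20]$, $Er=2$, $q=10$, $q'=6$, so $c_q=8$), the point $c_q$ sits in the shadow of $[q,q']$, at distance about $Er$ from $[y,q']$, and your transfer to a point $c'\in[y,q']$ fails. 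The good news is that this missed case is exactly where the $Er+k_0+3E$ term comes from: if $c_q$ is $O(E)$-close to a point $w\in[q,q']$, then $d(q,w)\ge d(c_q,q)-O(E)\gtrsim Er-O(E)$, hence $d(c_q,q')\le d(q,q')-d(q,w)+O(E)\lesssim Er+O(E)$, and one concludes directly with $q'\in Q'$. A parallel dichotomy is glossed in your third step: the transported point $c'$ may land on $[y,q']$ \emph{before} the internal point of the triangle $(y,p',q')$, in which case quasiconvexity of $Q'$ gives nothing and instead one uses $d(y,p')\le d(y,z)$ together with $d(y,c')\approx d(y,z)-Er$ to get $d(c',p')\lesssim Er+O(E)$; only in the other branch does one get $k_0+O(E)$ via $[p',q']\subset N_{k_0}(Q')$, yielding the $2k_0+5E$ term. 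So the skeleton is right, but both trichotomies need to be stated and each branch closed (with explicit thin-triangle constants) before the claimed $r'=\max\{2k_0+5E,\,Er+k_0+3E\}$ --- or any bound independent of $|\mathcal Q|$, which your construction does give since $c$ depends only on $y$ and $Q$ --- can be asserted.
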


The strength of this theorem is twofold. Firstly, the constant $r'$ is independent of the size of the set $\mathcal Q$---a statement with this independence does not seem to appear elsewhere in the geometric group theory literature. The second strength is that the construction of the point $c$ is both completely explicit and allows for a lot of flexibility in the choice of $y$. Observe that the condition that $\{d(y,Q):Q\in\mathcal Q\}$ is bounded is satisfied automatically if any $Q\in\mathcal Q$ is bounded.

We will now prove that hierarchically quasiconvex subsets of a hierarchically hyperbolic space satisfy a coarse version of the Helly property.

\begin{thm}[Coarse Helly property] \label{thm:hellyhqc}
Let $(X,\s)$ be an HHS with constant $E$, and let $\mathcal Q$ be a collection of $k$--hierarchically quasiconvex subsets of $X$ such that either $\mathcal Q$ is finite or $\mathcal Q$ contains an element with bounded diameter. Suppose that there is a constant $r$ such that any two elements of $\mathcal Q$ are $r$--close. There is a constant $R=R(E,k,r)$ such that there is a point $x\in X$ with $d(x,Q)\leq R$ for all $Q\in\mathcal Q$.
\end{thm}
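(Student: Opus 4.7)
The plan is to combine the Chepoi--Dragan--Vax\`es coarse Helly theorem in hyperbolic spaces (Theorem~\ref{thm:cdvhelly}) with the realisation theorem for HHSs (Theorem~\ref{thm:realisation}), and then promote coordinate-wise closeness in the various $\mcC U$ to honest closeness in $X$ via the defining property of hierarchical quasiconvexity. First I would fix a base point $y\in X$: if some $Q_0\in\mathcal Q$ has bounded diameter, take $y\in Q_0$, so that pairwise $r$--closeness forces $\{d(y,Q):Q\in\mathcal Q\}$ to be uniformly bounded; in the other case $\mathcal Q$ is finite, and any $y$ works. Because each $\pi_U$ is $(E,E)$--coarsely Lipschitz, the images $\{\pi_U(Q):Q\in\mathcal Q\}$ form a collection of $k(0)$--quasiconvex subsets of the $E$--hyperbolic space $\mcC U$ that are pairwise close in a way controlled by $E$ and $r$, with $\{d_U(\pi_U(y),\pi_U(Q)):Q\in\mathcal Q\}$ bounded. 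Applying Theorem~\ref{thm:cdvhelly} in each $\mcC U$ then yields a point $c_U\in\mcC U$ with $d_U(c_U,\pi_U(Q))\leq r'$ for every $Q\in\mathcal Q$, where $r'$ depends only on $E$, $k(0)$, and $r$.

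The main step is to show that the tuple $(c_U)_{U\in\s}$ is $\kappa$--consistent for some $\kappa=\kappa(E,k(0),r)$. The underlying idea is that each $Q\in\mathcal Q$ already projects to a consistent tuple, so for a transverse pair $U\trans V$ the set $Q$ partitions into the points whose $U$--coordinate is $E$--close to $\rho^V_U$ and those whose $V$--coordinate is $E$--close to $\rho^U_V$. If the former piece is nonempty for every $Q\in\mathcal Q$, the $k(0)$--quasiconvexity of each $\pi_U(Q)$ together with the explicit construction of $c_U$ in Theorem~\ref{thm:cdvhelly} (namely that $c_U$ lies on a geodesic targeting a specific far point in some maximal $\pi_U(Q_{\max})$) should force $d_U(c_U,\rho^V_U)\leq\kappa$; otherwise every $\pi_V(Q)$ lies in a uniform neighbourhood of $\rho^U_V$, and $c_V$ inherits $d_V(c_V,\rho^U_V)\leq\kappa$ from its closeness to any single $\pi_V(Q)$. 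The nested case $U\pnest V$ is handled by the same dichotomy, using the bounded geodesic image axiom to control $\diam(\{c_U\}\cup\rho^V_U(c_V))$ whenever $d_V(c_V,\rho^U_V)$ is large.

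Once consistency is secured, Theorem~\ref{thm:realisation} produces $x\in X$ with $d_U(x,c_U)\leq\theta_e(\kappa)$ for every $U\in\s$, so $d_U(x,Q)\leq\theta_e(\kappa)+r'$ for every $U$ and every $Q\in\mathcal Q$. The defining property of $k$--hierarchical quasiconvexity then upgrades these uniform coordinate bounds to $d_X(x,Q)\leq k(\theta_e(\kappa)+r')=:R$, and all constants depend only on $E$, $k$, and $r$ rather than on $|\mathcal Q|$. I expect the main obstacle to be the consistency verification: being close to the \emph{set} $\pi_U(Q)$ is much weaker than being close to any particular projected point, so the argument must exploit both the specific form of the Chepoi--Dragan--Vax\`es point and the uniform quasiconvexity of the projections to bridge that gap, especially in the nested case where bounded geodesic image must be applied delicately.
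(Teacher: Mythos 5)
Your overall architecture---run Theorem~\ref{thm:cdvhelly} in every coordinate space, check that the resulting tuple is consistent, apply Theorem~\ref{thm:realisation}, and finish with hierarchical quasiconvexity---is the same as the paper's, and the first and last steps are fine. The genuine gap is exactly where you predicted it: the consistency verification, and the dichotomy you propose does not work. In the transverse case, your first branch asserts that if every $Q\in\mathcal Q$ contains a point whose $U$--coordinate is $E$--close to $\rho^V_U$, then the Chepoi--Dragan--Vax\`es point $c_U$ must be close to $\rho^V_U$. That implication is false: if every $\pi_U(Q)$ contains one long common quasigeodesic segment through $\rho^V_U$ and your basepoint $\pi_U(y)$ sits at the far end of that segment, then $d_U(y,\pi_U(Q))$ is essentially zero for every $Q$, so the construction returns $c_U\approx\pi_U(y)$, far from $\rho^V_U$---being close to each \emph{set} $\pi_U(Q)$ gives no control on where $c_U$ sits along those sets. (The ``otherwise'' branch is also misstated: its negation only yields one $Q$ with $\pi_V(Q)$ in the $E$--neighbourhood of $\rho^U_V$, though that single $Q$ does suffice there.) What actually rescues consistency is the location of the \emph{basepoint}: one must condition on which half of the consistency inequality the basepoint itself satisfies, and then use the farthest set $Q_{\max}$ of the construction---all of whose $U$--projection lies at distance at least $d_U(\pi_U(y),c_U)$ from the basepoint---to transfer the estimate to the other coordinate. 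The nested case needs strictly more than ``the same dichotomy plus bounded geodesic image'': when $d_V(c_V,\rho^U_V)$ is large one needs quasiconvexity of $\pi_V(Q_{\max})$ to obtain a bounded, coherent $\rho^V_U$--image, and one must separately show that every $\pi_U(Q)$ passes uniformly close to a common point of $\mcC U$ in order to pin down $c_U$; none of this appears in your sketch.

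This is precisely why the paper does not base the construction at an arbitrary point: it fixes a maximal pairwise-orthogonal family $\mathcal U$ of nest-minimal domains and applies Theorem~\ref{thm:cdvhelly} in each $\mcC V$ starting from the coarsely well-defined point $\rho^{\mathcal U}_V$, so that for each pair of domains either the basepoint is $2E$--close to the relevant $\rho$ (the key estimate $(*)$ in the paper's claim) or the pair is handled via relations among the $\rho$'s themselves, using Proposition~1.8 of \cite{behrstockhagensisto:hierarchically:2} and the partial realisation axiom. Your choice of basepoints $\pi_V(y)$ for a single $y\in X$ can in fact be made to work---the consistency of the tuple $(\pi_V(y))_{V\in\s}$ can play the role of the paper's choice of $\mathcal U$---but only through a case analysis of the same kind (basepoint near $\rho^V_U$ versus the other consistency alternative; the geodesic from $\pi_V(y)$ to $c_V$ passing near $\rho^U_V$ or not; bounded geodesic image combined with quasiconvexity of the projections), which your proposal neither states nor proves. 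As written, the central step is missing.
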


\begin{proof}	
Let us say that a domain $U$ \emph{begets} a domain $V$ if either $U\trans V$ or $U\pnest V$. If $U$ begets $V$ then there is a well-defined bounded set $\rho^U_V$.
	
Let $\mathcal U=\{U_1,\dots,U_n\}$ be a maximal collection of pairwise orthogonal, nest-minimal domains. Note that we may choose $\mathcal U$ arbitrarily. For any domain $V\in\s\smallsetminus\mathcal U$, there is some $i$ such that $U_i$ begets $V$. By \cite[Lemma~1.5]{durhamhagensisto:boundaries}, for any domain $V\in\s$ we have $d_V(\rho^{U_i}_V,\rho^{U_j}_V)\leq2E$ whenever $U_i$ and $U_j$ both beget $V$. Moreover, recall that $\diam\rho^{U_i}_V\leq E$. At the cost of increasing the hierarchical hyperbolicity constant to at most $10E$, we can therefore perturb the HHS structure to assume that every $\rho^{U_i}_V$ is a singleton, and that $\rho^{U_i}_V=\rho^{U_j}_V$ whenever both $U_i$ and $U_j$ beget $V$. We write $\rho^\mcU_V$ for the singleton 
\[ 
\rho^\mcU_V\hspace{2mm}=\bigcup_{\{i\hspace{1mm}:\hspace{1mm}U_i\text{ begets }V\}}\rho^{U_i}_V.
\]
As mentioned, the construction of $\mcU$ ensures that the point $\rho^\mcU_V$ exists for all $V\in\s\smallsetminus\mcU$.
	
We are free to assume that if $r>0$ then $r>1$. Thus, by definition of hierarchical quasiconvexity and the fact that projection maps are $(E,E)$--coarsely Lipschitz, we have that, for any domain $V$, the sets $\pi_V(Q)$, for 
$Q \in \mathcal Q$, are pairwise $2Er$--close and $k_0$--quasiconvex, where $k_0=k(0)$. We assumed that either $\mathcal Q$ is finite or it contains an element with bounded diameter, so for any point $y \in X$ and any domain $V$, the set $\{d_V(y,Q):Q\in\mathcal Q\}$ is bounded. Let $r'$ be as in the statement of Theorem~\ref{thm:cdvhelly}. That theorem now allows us to choose, for each $U\in\mathcal U$, a point $b_U$ in $\mcC U$ with $d_U(b_U,Q)\leq r'$ for all $Q\in\mathcal Q$. For any other domain $V$, let $b_V$ be the point of $\mcC V$ obtained by applying Theorem~\ref{thm:cdvhelly} in the hyperbolic graph $\mcC V$, with quasiconvex subsets $\{\pi_V(Q):Q\in\mathcal Q\}$ and starting vertex~$\rho^\mcU_V$.
	
\mk

{\bf Claim:} The tuple $(b_V)_{V\in\s}$ is $(r'+7E+Er)$--consistent.

\mk
	
{\bf Proof of Claim:} Suppose that $W$ begets $V$, and that $d_V(\rho^W_V,\rho^\mcU_V)\leq2E$. Assume that $d_V(b_V,\rho^W_V)>r'+7E+Er$. By the construction of $b_V$, there exists some $Q\in\mathcal Q$ such that $d_V(b_V,Q) \leq Er$. As a consequence, we have $d_V(Q,\rho^W_V) \ge d_V(b_V,\rho^W_V) - d_V(b_V,Q) -\diam\rho^W_V > r'+6E$. If $W\trans V$, then $\pi_W(Q)$ is contained in the $E$--neighbourhood of $\rho^V_W$ by consistency for elements of $Q$. In particular, $d_W(\rho^V_W,b_W)\leq r'+E$ as $b_W$ is $r'$--close to $\pi_W(Q)$. If $W\pnest V$, then since $\pi_V(Q)$ is $k_0$--quasiconvex and $r'+6E>k_0+6E$, bounded geodesic image and consistency show that the set $\rho^V_W(\pi_V(Q))$ has diameter at most $E$, and its $E$--neighbourhood contains $\pi_W(Q)$. Moreover, its $E$--neighbourhood contains $\rho^V_W(b_V)$ by bounded geodesic image, as witnessed by the geodesic used to construct $b_V$. Thus
\begin{align*}
\diam(b_W\cup\rho^V_W(b_V))
	& \leq d_W(b_W,Q)+\diam\pi_W(Q)+d_W(Q,\rho^V_W(b_V))+\diam\rho^V_W(b_V) \\
	& \leq r'+3E+3E+E=r'+7E.
\end{align*}
		
The above paragraph will be referred to as $(*)$ for the rest of the proof of the claim. We split the checking of the consistency inequalities for pairs $(V,W)$ of domains into three cases.
		
\medskip
\noindent\underline{Case~1}. $W\in\mathcal U$ begets $V$.
		
In this case, $\rho^W_V=\rho^\mcU_V$, so we are done by $(*)$.
		
\medskip
\noindent\underline{Case~2}. There is some $U\in\mcU$ that begets both $V$ and $W$.
		
Proposition~1.8 of \cite{behrstockhagensisto:hierarchically:2} states that if $W$ begets $V$ then $\rho^U_V$ and $\rho^U_W$ satisfy the consistency inequalities for $V$ and $W$. Consequently, by $(*)$, the only case we need to check here is when $U\trans W$, $W\pnest V$, and $\diam(\rho^U_W\cup\rho^V_W(\rho^U_V))\leq2E$. Assuming that $d_V(\rho^W_V,b_V)>r'+7E+Er$, there are two possibilities, depending on the location of $\rho^U_V$. 
		
If there is a geodesic $[\rho^U_V,b_V]$ that is disjoint from the $E$--neighbourhood of $\rho^W_V$, then $\diam(\rho^V_W(\rho^U_V)\cup\rho^V_W(b_V))\leq E$, so $d_W(\rho^U_W,\rho^V_W(b_V))\leq3E$. Moreover, for each $Q\in\mathcal Q$ there is some $q\in Q$ such that any geodesic $[b_V,\pi_V(q)]$ is disjoint from the $E$--neighbourhood of $\rho^W_V$. In particular, $\rho^V_W(b_V)$ is $2E$--close to each $\pi_W(q)$, and hence $\rho^U_W$ is $5E$--close to each $\pi_W(Q)$. Since $b_W$ lies on a shortest geodesic between $\rho^U_W$ and some $\pi_W(Q)$, we get that $d_W(b_W,\rho^U_W)\leq5E$, and so $b_W$ is $8E$--close to $\rho^V_W(b_V)$.
		
Otherwise, every geodesic $[\rho^U_V,b_V]$ meets the $E$--neighbourhood of $\rho^W_V$. By construction of $b_V$, there exists $Q\in\mathcal Q$ such that $d_V(\rho^W_V,Q)> (r'+7E+Er) +Er-2E = r'+5E+2Er$. By the same argument as in $(*)$, we now get that $\rho^V_W(b_V)$ is $3E$--close to $\pi_W(Q)$, which has diameter at most $3E$. Hence $\diam(b_W\cup\rho^V_W(b_V))\leq r'+7E$.
		
\medskip
\noindent\underline{Case 3.} No $U_i$ begets both $V$ and $W$, and neither $V$ nor $W$ is in $\mcU$.
		
After relabelling we can assume that $U_1$ begets $V$ and $U_2$ begets $W$. Since $U_1$ does not beget $W$ we have $U_1\bot W$, and similarly $U_2\bot V$. In particular, the only case that needs checking is when $V\trans W$. The partial realisation axiom applied to any points $p_1 \in \mcC U_1$ and $p_2 \in \mcC U_2$ provides a point $z\in X$ such that $d_V(z,\rho^{U_1}_V)\leq E$ and $d_W(z,\rho^{U_2}_W)\leq E$. By consistency for $z$, we have that either $d_V(\rho^W_V,\rho^{U_1}_V)\leq2E$ or $d_W(\rho^V_W,\rho^{U_2}_W)\le2E$. We are done by $(*).\hfill\diamondsuit$

\mk
	
In light of the claim, Theorem~\ref{thm:realisation} provides a point $x\in X$ such that $d_V(x,b_V)\leq\theta_e(r'+7E+Er)$ for all $V\in\s$. By construction of the points $b_V$, we have that $d_V(x,Q)\leq r'+\theta_e(r'+7E+Er)$ for all $Q\in\mathcal Q$. Hierarchical quasiconvexity of the $Q$ now tells us that $x$ is $k(r'+\theta_e(r'+7E+Er))$--close to $Q$ for all $Q\in\mathcal Q$. 
\end{proof}

It is worth noting that the proof of Theorem~\ref{thm:hellyhqc} gives flexibility of a similar kind to that in Theorem~\ref{thm:cdvhelly}. Indeed, we are free in our choice of $\mathcal U$, and once this is chosen we apply the Chepoi--Dragan--Vax\`es construction in each of the hyperbolic spaces associated with $\mathcal U$, without restriction on the choice of starting point therein. We shall not need to make use of this in the present paper.

\begin{cor} \label{cor:hhscoarselyhelly}
If $X$ is an HHS, then $(X,\sigma)$ is coarsely injective, hence roughly geodesic.
\end{cor}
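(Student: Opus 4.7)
The plan is to verify the defining property of coarse injectivity for $(X,\sigma)$ directly, by applying Theorem~\ref{thm:hellyhqc} to uniformly enlarged $\sigma$-balls. The rough geodesicity claim will then follow from Proposition~\ref{prop:dense_in_hull} together with the geodesicity of injective metric spaces.

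Three outputs of Section~\ref{section:newmetric} are what is required. Theorem~\ref{thm:properties_of_sigma} yields a uniform constant $M$ such that every $\sigma$-ball is $M$-coarsely median-convex, a quasi-isometry constant $q$ between $\sigma$ and $d$, and a weak rough geodesicity constant $C_\sigma'$ for $\sigma$. By \cite[Proposition~5.11]{hhs_convexity}, every $M$-coarsely median-convex subset of $X$ is $k$-hierarchically quasiconvex in $(X,\s)$ for a uniform function $k$, so every $\sigma$-ball is $k$-hierarchically quasiconvex.

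Given a family $\{B_\sigma(x_i,r_i)\}_{i\in I}$ with $\sigma(x_i,x_j)\le r_i+r_j$, I would apply weak rough geodesicity to each pair $(x_i,x_j)$ with parameter $r=\min(r_i,\sigma(x_i,x_j))$ to produce a point in $B_\sigma(x_i,r_i+C_\sigma')\cap B_\sigma(x_j,r_j+2C_\sigma')$. The enlarged family $Q_i := B_\sigma(x_i,r_i+2C_\sigma')$ thus consists of $k$-hierarchically quasiconvex subsets of bounded diameter, any two of which share a point. Theorem~\ref{thm:hellyhqc} then produces a universal constant $R$ and a point $x\in X$ with $d(x,Q_i)\le R$ for every $i\in I$; converting via the quasi-isometry gives a uniform $\delta$ with $\sigma(x,B_\sigma(x_i,r_i))\le\delta$. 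This is coarse injectivity of $(X,\sigma)$.

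Finally, Proposition~\ref{prop:dense_in_hull} gives that $(X,\sigma)$ is coarsely dense in its injective hull $E(X,\sigma)$, which is geodesic by \cite{lang:injective}; sampling such geodesics by nearby points of $X$ yields rough geodesics in $(X,\sigma)$. The only nontrivial technical step is the invocation of \cite[Proposition~5.11]{hhs_convexity}, which is precisely what bridges the median-geometric property of $\sigma$-balls established in Section~\ref{section:newmetric} and the hierarchical quasiconvexity hypothesis required by Theorem~\ref{thm:hellyhqc}.
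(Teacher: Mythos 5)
Your proposal is correct and follows essentially the same route as the paper: enlarge the $\sigma$-balls by a uniform constant using weak rough geodesicity so that they pairwise intersect, note they are uniformly hierarchically quasiconvex (via coarse median-convexity from Theorem~\ref{thm:properties_of_sigma} and \cite[Proposition~5.11]{hhs_convexity}) and bounded, apply Theorem~\ref{thm:hellyhqc}, and transfer the resulting point back through the quasi-isometry between $d$ and $\sigma$; rough geodesicity is then deduced exactly as in the paper from coarse density in the (geodesic) injective hull. The only cosmetic difference is that the paper explicitly cites Proposition~\ref{prop:applies_to_hhss} to justify invoking Theorem~\ref{thm:properties_of_sigma}, which you use implicitly.
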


\begin{proof}
By Proposition~\ref{prop:applies_to_hhss}, the geodesic coarse median space $(X,\mu,d)$ has quasicubical intervals, so Theorem~\ref{thm:properties_of_sigma} tells us that the metric $\sigma$ is weakly roughly geodesic on $X$, that it is quasi-isometric to $d$, and that $\sigma$--balls are uniformly coarsely median--convex. Let $\{B_\sigma(x_i,r_i):i\in I\}$ be a family of balls in $(X,\sigma)$ with the property that $\sigma(x_i,x_j)\le r_i+r_j$ for all $i,j\in I$. Since $\sigma$ is weakly roughly geodesic, there is a constant $\delta$, independent of the family of balls, such that the balls $B_\sigma(x_i,r_i+\delta)$ intersect pairwise.
	
Let $B_i$ be the image of the ball $B_\sigma(x_i,r_i+\delta)$ under the identity quasi-isometry $(X,\sigma)\to(X,d)$. The $B_i$ are uniformly coarsely median-convex, and so they are uniformly hierarchically quasiconvex by \cite[Proposition~5.11]{hhs_convexity}. They also intersect pairwise, and each is bounded, so Theorem~\ref{thm:hellyhqc} produces a point at uniformly bounded $d$--distance from each $B_i$. As $d$ and $\sigma$ are quasi-isometric, this point is at uniformly bounded $\sigma$--distance from each $B_\sigma(x_i,r_i+\delta)$. Thus $(X,\sigma)$ is coarsely injective.

Since any injective space is geodesic, we deduce that the coarsely injective metric space $(X,\sigma)$ is not merely weakly roughly geodesic, but actually roughly geodesic, as it is coarsely dense in its injective hull.
\end{proof}

Usually it really is necessary to change the metric: \cite[Ex.~5.13]{chalopinchepoigenevoishiraiosajda:helly} shows that $\Z^3$ with the standard $\ell^1$ metric is not coarsely injective, though it is an HHG.

\mk

We now explain how to deduce the existence of a bicombing from work of Lang. See Section~\ref{subsection:metric_consequences} for the definitions of roughly conical and roughly reversible bicombings.

\bcor \label{cor:hhs_bicombing}
If $(X,\s)$ is an HHS, then $(X,\sigma)$ admits a roughly conical, roughly reversible, bicombing by rough geodesics that is coarsely equivariant under the automorphism group of $(X,\s)$.
\ecor

\bp
According to Corollary~\ref{cor:hhscoarselyhelly}, the metric space $(X,\sigma)$ is coarsely injective, so it is $D$--coarsely dense in its injective hull for some $D$. A construction of Lang shows that every injective metric space $E$ admits a conical, reversible, geodesic, $\Isom E$--invariant bicombing $\gamma'$ \cite{lang:injective}. Take $E=E((X,\sigma))$. For each $a,b \in X$ and $t \in [0,1]$, define $\gamma_{a,b}(t)$ as any point of $X$ at distance at most $D$ from $\gamma'_{a,b}(t)$. Since $\gamma(t)$ is at uniform distance $D$ from $\gamma'(t)$, we deduce that $\gamma$ is a bicombing on $(X,\sigma)$ with the listed properties.
\ep

Note that if the action of the automorphism group of $(X,\s)$ on $X$ is free, then the bicombing may be chosen to be actually equivariant.

\mk

Let us now discuss the consequences of our construction for HHGs.

\begin{cor} \label{cor:inducedaction}
If $G$ is an HHG, then $G$ admits a proper, cocompact, isometric action on the coarsely injective space $(G,\sigma)$.
\end{cor}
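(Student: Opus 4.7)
The plan is to observe that everything needed has already been assembled and then chain together four earlier results. Since $G$ is an HHG, the left regular action of $G$ on its Cayley graph $(G,d)$ is proper, cocompact, and by isometries. By the definition of an HHG, the elements of $G$ induce isometries $\mcC U \to \mcC gU$ for all $U \in \s$, and one can take the coarse median $\mu$ on $G$ to be $G$--equivariant (this is what the realisation theorem gives once the tuple $(m_U)$ is defined compatibly with the $G$--action on $\s$, as discussed in Section~\ref{subsection:hhsbackground}). In particular, $G$ acts on $(G,\mu,d)$ by median isometries in the sense of Lemma~\ref{lem:acting_on_new_metric}.

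Next I would apply Lemma~\ref{lem:acting_on_new_metric} to conclude that the induced action of $G$ on $(G,\mu,\sigma)$ is isometric. It then remains to verify that this action is still proper and cocompact with respect to the new metric $\sigma$. This is immediate from Proposition~\ref{prop:bilipschitzmetrics} (or, equivalently, the quasi-isometry statement in Theorem~\ref{thm:properties_of_sigma}), which tells us that $d$ and $\sigma$ are quasi-isometric, and therefore induce the same class of bounded sets and the same cobounded quotients. Since the regular action of $G$ on $(G,d)$ is proper and cocompact, the same action on $(G,\sigma)$ is proper and cocompact as well.

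Finally, Corollary~\ref{cor:hhscoarselyhelly} tells us that $(G,\sigma)$ is coarsely injective (indeed, roughly geodesic), completing the statement. I do not expect any genuine obstacle here; the only point that requires a moment's thought is confirming that the HHG action qualifies as an action by median isometries so that Lemma~\ref{lem:acting_on_new_metric} applies. This follows from the equivariance of the HHS coarse median constructed via the realisation theorem, which is standard and was recorded at the end of Section~\ref{subsection:hhsbackground}. Note that properness of $(G,\sigma)$ as a metric space (needed if one reads ``proper'' in the topological sense) also follows from the quasi-isometry with $(G,d)$ together with local finiteness of the Cayley graph.
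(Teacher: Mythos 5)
Your proposal is correct and follows essentially the same route as the paper: coarse injectivity from Corollary~\ref{cor:hhscoarselyhelly}, isometry of the action from the equivariance of the HHG median via Lemma~\ref{lem:acting_on_new_metric}, and properness and cocompactness from the quasi-isometry of $d$ and $\sigma$ in Proposition~\ref{prop:bilipschitzmetrics}. The extra remarks you add (why the HHG action is by median isometries, and metric properness transferring along the quasi-isometry) are exactly the points the paper's shorter proof takes for granted.
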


\begin{proof}
$(G,\sigma)$ is coarsely injective by Corollary~\ref{cor:hhscoarselyhelly}. Since the median is equivariant in an HHG, Lemma~\ref{lem:acting_on_new_metric} tells us that the action is isometric. Properness and cocompactness follow from Proposition~\ref{prop:bilipschitzmetrics}. 
\end{proof}

\begin{rmk} \label{remark:HHS_aut_is_enough}
In fact, we do not quite need to assume that we have a hierarchically hyperbolic group in Corollary~\ref{cor:inducedaction}: we only need a proper cocompact action by median isometries on an HHS. In fact, cocompactness can be relaxed to coboundedness for the sake of the applications in this paper. For example, it would be sufficient to assume that $G$ is a group acting properly coboundedly by HHS automorphisms on an HHS. The consequences for HHGs listed here and in the introduction therefore apply in this generality.
\end{rmk}

The next lemma is a modified version of \cite[Proposition~6.7]{chalopinchepoigenevoishiraiosajda:helly}, in which the assumption that the hull is proper has been dropped.

\begin{lem} \label{lem:modifiedccgho}
If a group $G$ acts properly coboundedly on a coarsely injective space $X$, then $G$ acts properly coboundedly on the injective hull $E(X)$. In particular, every HHG admits a proper, cobounded action on an injective space.
\end{lem}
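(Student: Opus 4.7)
The plan is to extend the given $G$-action on $X$ to an isometric action on $E(X)$, and then check the two required properties separately. Extension of the action is a standard feature of the injective hull: every isometry $\phi : X \to X$ extends uniquely to an isometry $E(\phi) : E(X) \to E(X)$, and this extension procedure is functorial, so composing it with the homomorphism $G \to \Isom(X)$ gives a homomorphism $G \to \Isom(E(X))$. The extension also restricts back to the original action on $X \subset E(X)$, which is crucial for the rest of the argument.

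Coboundedness of the extended action is essentially automatic. By Proposition~\ref{prop:dense_in_hull}, coarse injectivity of $X$ means $X$ is $D$--coarsely dense in $E(X)$ for some $D \ge 0$. If $K \subset X$ is a bounded set with $G \cdot K = X$ and $k_0 \in K$ is any basepoint, then every $e \in E(X)$ lies within $D + \diam(K)$ of $G \cdot k_0$, so the action on $E(X)$ is cobounded.

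The main obstacle, and the reason the original version of the result in \cite{chalopinchepoigenevoishiraiosajda:helly} assumed properness of $E(X)$, is verifying metric properness of the extended action without compactness. The key observation is that the embedding $X \hookrightarrow E(X)$ is \emph{isometric}, so distances in $X$ and in $E(X)$ between points of $X$ agree. Given a ball $B_{E(X)}(e, R)$ and $g \in G$ with $g B_{E(X)}(e,R) \cap B_{E(X)}(e,R) \neq \emptyset$, we have $d_{E(X)}(ge, e) \le 2R$. Choose $x \in X$ with $d_{E(X)}(x, e) \le D$; since $g$ acts by isometries on $E(X)$, the triangle inequality yields $d_{E(X)}(gx, x) \le 2R + 2D$, and therefore $d_X(gx, x) \le 2R + 2D$ by isometricity of the inclusion. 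In particular $gx \in B_X(x, 2R+2D) \cap g \cdot B_X(x, 2R+2D)$, so the set of such $g$ is contained in the finite set $\{g \in G : g B_X(x, 2R+2D) \cap B_X(x, 2R+2D) \ne \emptyset\}$, which is finite by properness of the action on $X$.

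The ``in particular'' then follows immediately: by Corollary~\ref{cor:inducedaction}, every HHG $G$ acts properly cocompactly on the coarsely injective space $(G, \sigma)$, so by the lemma just proved it acts properly coboundedly on the injective space $E((G,\sigma))$.
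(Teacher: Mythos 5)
Your proposal is correct and follows essentially the same route as the paper: extend the action to $E(X)$ via the canonical (equivariant) isometric embedding from Lang's work, get coboundedness from the coarse density of $X$ in $E(X)$ (Proposition~\ref{prop:dense_in_hull}), and prove properness by pushing points of $E(X)$ to $D$--close points of $X$ and invoking properness of the action on $X$; the only cosmetic difference is that you argue with metric balls while the paper phrases the properness step for arbitrary bounded subsets. The final ``in particular'' via Corollary~\ref{cor:inducedaction} also matches the paper.
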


\begin{proof}
There is an induced action of $G$ on $E(X)$ and the isometric embedding $e \colon X \to E(X)$ is equivariant with respect to this induced action \cite[Proposition~3.7]{lang:injective}.  To simplify notation, we identify the points of $X$ with their images under $e$ and thus identify $X$ with $E(X)$.
The Hausdorff distance between $X$ and $E(X)$ is bounded by some constant $D$, so the action of $G$ on $E(X)$ is cobounded. For properness, let $Y\subset E(X)$ be bounded and let $Y'=\{x\in X:d(Y,x)\leq D\}\neq\varnothing$. Since $e$ is an isometric embedding, $Y'$ is bounded. If $g\in G$ has $gY\cap Y\neq\varnothing$, then pick $y\in Y$ with $gy\in Y$ and let $x\in X$ have $d(y,x)\leq D$. Then $d(gy,gx)\leq D$, so $gx$ is $D$--close to $Y$. That is, $gY'\cap Y'\neq\varnothing$, so since $Y'$ is bounded and the action of $G$ on $X$ is proper, there are only finitely many such $g$. The final sentence follows from Corollary~\ref{cor:inducedaction}.
\end{proof}

Next we strengthen Corollary~\ref{cor:hhs_bicombing} in the case of HHGs. In particular, this applies to (extended) mapping class groups of finite type surfaces.

\begin{cor} \label{semihyperbolicity}
If $G$ is an HHG, then $G$ is semihyperbolic.
\end{cor}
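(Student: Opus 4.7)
The plan is to combine Corollary~\ref{cor:hhs_bicombing} with the framework of Alonso and Bridson~\cite{alonsobridson:semihyperbolic}. By Corollary~\ref{cor:inducedaction}, the HHG $G$ acts properly, cocompactly and by isometries on the coarsely injective space $(G,\sigma)$, and Corollary~\ref{cor:hhs_bicombing} provides a roughly conical, roughly reversible bicombing $\gamma$ on $(G,\sigma)$ by rough geodesics that is coarsely equivariant under the $G$--action. Since being roughly conical is a stronger condition than being bounded in the sense of Alonso--Bridson, $\gamma$ is in particular a bounded quasigeodesic bicombing, with the quasigeodesic constants coming from the rough geodesicity of $(G,\sigma)$ (note that $\sigma$ is roughly geodesic by Corollary~\ref{cor:hhscoarselyhelly}).

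Next I would upgrade the coarse equivariance to genuine equivariance. The key point (as flagged in the paragraph immediately preceding the statement) is that the regular action of $G$ on itself is free and transitive. This is precisely the setup in which Alonso and Bridson's averaging/selection argument applies: given a coarsely $G$--equivariant bounded bicombing on the metric space $(G,\sigma)$, one defines a new bicombing $\gamma'$ by setting $\gamma'_{a,b}(t) = a \cdot \gamma_{e, a^{-1}b}(t)$, i.e.\ by transporting the combing line from the identity to an arbitrary pair of points using left multiplication. Since $G$ acts by isometries on $(G,\sigma)$ (Lemma~\ref{lem:acting_on_new_metric}), and since coarse equivariance of $\gamma$ bounds the distance between $\gamma_{a,b}$ and its translate, the new bicombing $\gamma'$ is still a bounded quasigeodesic bicombing, and it is now strictly $G$--equivariant.

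Finally, I would transfer $\gamma'$ from the $\sigma$--metric to the word metric. By Proposition~\ref{prop:bilipschitzmetrics}, the identity map $(G, d_{\mathrm{word}}) \to (G, \sigma)$ is a $G$--equivariant quasi-isometry, so $\gamma'$ is automatically a bounded quasigeodesic bicombing on the Cayley graph of $G$ with respect to the word metric (the defining inequality for ``bounded'' is preserved up to adjusting constants under quasi-isometries that are the identity on the underlying set). This is exactly the data required by the Alonso--Bridson definition for $G$ to be semihyperbolic.

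The main conceptual content all lives in the already-established Corollary~\ref{cor:hhs_bicombing} and Corollary~\ref{cor:inducedaction}; the only real step to check is the passage from coarse equivariance to strict equivariance, and this is the standard Alonso--Bridson manoeuvre that crucially uses freeness of the regular action. I do not expect a serious obstacle, but that freeness step is where one must be careful that the ``roughly conical'' (and hence bounded) property survives the redefinition, which it does because the modification only changes $\gamma_{a,b}(t)$ by a uniformly bounded amount.
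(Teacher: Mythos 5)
Your argument is correct, but it is packaged differently from the paper's proof. The paper does not work with the bicombing of Corollary~\ref{cor:hhs_bicombing} at all: it passes to the injective hull via Lemma~\ref{lem:modifiedccgho}, quotes \cite[Proposition~3.8]{lang:injective} to get a genuinely $G$--invariant \emph{bounded geodesic} bicombing on the hull (no coarse-equivariance issues arise there, since Lang's bicombing is $\mathrm{Isom}$--invariant), and then outsources all remaining bookkeeping --- pulling the bicombing back to the group, discretising, changing to the word metric --- to \cite[Theorem~4.1]{alonsobridson:semihyperbolic}, with the freeness of the regular action of $G$ on itself being exactly the hypothesis that theorem needs. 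You instead stay on $(G,\sigma)$, take the coarsely equivariant rough bicombing of Corollary~\ref{cor:hhs_bicombing}, and make it strictly equivariant by transporting the combing lines based at the identity, $\gamma'_{a,b}(t)=a\cdot\gamma_{e,a^{-1}b}(t)$; this is valid (it is precisely the remark following Corollary~\ref{cor:hhs_bicombing}, specialised to the free, transitive left action), and your observation that coarse equivariance bounds $\sigma\bigl(\gamma'_{a,b}(t),\gamma_{a,b}(t)\bigr)$ is the right reason the roughly conical/bounded property survives. What your route buys is that no orbit map or hull is needed, since the underlying set is already $G$; what it costs is that you must handle by hand the steps the paper delegates to Alonso--Bridson: in their formalism a bicombing of a group consists of discrete, uniformly bounded-step paths in the word metric, so your continuous $[0,1]$--parametrised rough-geodesic bicombing still needs the routine discretisation and reparametrisation (uniform constants are fine because $\sigma$ and the word metric are quasi-isometric, by Proposition~\ref{prop:bilipschitzmetrics}). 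That step is standard and not a gap, but it is worth stating rather than leaving implicit.
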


\begin{proof}
By Lemma~\ref{lem:modifiedccgho}, $G$ acts properly coboundedly on an injective space $E$. Every orbit map $G\to E$ is a $G$--equivariant quasi-isometry. By \cite[Proposition~3.8]{lang:injective}, $E$ has a $G$--invariant, bounded, geodesic bicombing in the sense of \cite{alonsobridson:semihyperbolic}. As the action of $G$ on itself is free, it is semihyperbolic by \cite[Theorem~4.1]{alonsobridson:semihyperbolic}.
\end{proof}

\begin{cor} \label{prop:torsion}
If $G$ is an HHG, then $G$ has finitely many conjugacy classes of finite subgroups. 
\end{cor}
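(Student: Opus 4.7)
The plan is to exploit Lang's fixed-point theorem for bounded actions on injective metric spaces, together with the proper cobounded action on such a space that was produced in Lemma~\ref{lem:modifiedccgho}. Concretely, I would first apply Lemma~\ref{lem:modifiedccgho} to obtain a proper cobounded action of $G$ by isometries on the injective hull $E = E((G,\sigma))$, and fix a bounded set $K \subseteq E$ with $GK = E$.

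Next, for any finite subgroup $H \leq G$, the orbit $Hx$ of any point $x \in E$ is bounded, so \cite[Proposition~1.2]{lang:injective} provides a fixed point $p \in E$ for $H$. By coboundedness of the action there exists $g \in G$ with $gp \in K$, and then the conjugate $gHg^{-1}$ fixes the point $gp \in K$. Thus every finite subgroup of $G$ is conjugate to a finite subgroup that pointwise stabilises some point of $K$.

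Finally, I would use properness to bound these candidates. Since $K$ is bounded it is contained in some metric ball $B$, and the set $S = \{g \in G : gB \cap B \neq \emptyset\}$ is finite. If $H' \leq G$ fixes a point $q \in K \subseteq B$, then for every $h \in H'$ we have $hq = q \in hB \cap B$, so $H' \subseteq S$. Hence every finite subgroup of $G$ fixing a point of $K$ lies inside the finite set $S$, which contains only finitely many subgroups. Combining this with the previous paragraph shows that $G$ has only finitely many conjugacy classes of finite subgroups.

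This argument is essentially routine once the injective action is in hand; there is no real obstacle, since all of the work has been packaged into Lemma~\ref{lem:modifiedccgho} and Lang's fixed-point result. The only mild subtlety is reconciling the ``metric ball'' formulation of properness in the paper's conventions with boundedness of $K$, which is immediate because any bounded set lies inside a metric ball.
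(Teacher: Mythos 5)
Your argument is correct and is essentially the paper's proof: both use Lemma~\ref{lem:modifiedccgho} to get a proper cobounded action on an injective space, Lang's fixed-point result \cite[Proposition~1.2]{lang:injective} to fix a point for each finite subgroup, coboundedness to conjugate that fixed point into a fixed bounded region, and properness of the action on a ball containing that region to conclude finiteness. The only difference is cosmetic packaging (a bounded set $K$ with $GK=E$ versus a coarsely dense orbit and the ball $B(x,r)$).
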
	

\begin{proof}
By Lemma~\ref{lem:modifiedccgho}, $G$ acts properly coboundedly on an injective space $E$. Let $x\in E$ and let $r$ be a constant such that $G\cdot x$ is $r$--coarsely dense in $E$. Let $F$ be a finite subgroup of $G$. By \cite[Proposition~1.2]{lang:injective}, there is a point $z\in E$ that is fixed by $F$, and hence $F$ fixes the ball $B(z,r)$ in $E$, which contains a point of $G\cdot x$. It follows that a conjugate of $F$ fixes a point in $B(x,r)$, and we are done by properness of the action.
\end{proof}

\subsection{Packing subgroups} \label{subsection:packing} ~

Here we describe the application to bounded packing mentioned in the introduction. Following Hruska and Wise \cite{hruskawise:packing}, we say that a finite collection $\mathcal H$ of subgroups of a discrete group $G$ has \emph{bounded packing in $G$} if for each $N$ there is a constant $r$ such that for any collection of $N$ distinct cosets of elements of $\mathcal H$, at least two are separated by a distance of at least $r$ (with respect to some left-invariant, proper distance). If $\mathcal H$ consists of a single subgroup $H$, then we say that $H$ has bounded packing in $G$. 

\begin{cor} \label{cor:boundedpacking}
If $\mathcal H$ is a finite collection of hierarchically quasiconvex subgroups of a group $G$ that is an HHS, then $\mathcal H$ has bounded packing in $G$.
\end{cor}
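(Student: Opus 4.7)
The plan is to reduce bounded packing to the coarse Helly property (Theorem~\ref{thm:hellyhqc}) via a counting argument. Given $r>0$, I would show that the number of distinct cosets of elements of $\mathcal H$ that are pairwise at distance at most $r$ is bounded by a constant depending only on $r$, $\mathcal H$, and the HHS data, which is equivalent to bounded packing.

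First, since $\mathcal H$ is finite there is a single function $k$ witnessing hierarchical quasiconvexity simultaneously for every $H\in\mathcal H$. Because the regular action of $G$ on itself is by HHS automorphisms (immediate in the HHG case, and ensured by the hypotheses in the more general setting via Remark~\ref{remark:HHS_aut_is_enough}), left translation preserves hierarchical quasiconvexity with the same constants: every coset $gH$ with $H\in\mathcal H$ is $k$--hierarchically quasiconvex, with constants uniform over $g\in G$ and $H\in\mathcal H$.

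Now suppose that $\{g_i H_i:1\le i\le n\}$ is a finite collection of distinct cosets with $H_i\in\mathcal H$ that are pairwise within distance $r$. Since $\mathcal Q=\{g_iH_i\}$ is a finite family of uniformly hierarchically quasiconvex subsets of $G$ that are pairwise $r$--close, Theorem~\ref{thm:hellyhqc} provides a constant $R=R(E,k,r)$, independent of $n$, and a point $x\in G$ with $d(x,g_iH_i)\le R$ for every $i$. In particular, every coset $g_iH_i$ meets the ball $B(x,R)$.

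Finally, I would finish with a pigeonhole argument: since $G$ is finitely generated, $B(x,R)$ contains at most $M=M(R)$ elements, and each element of $G$ lies in exactly one coset of each $H\in\mathcal H$, hence in at most $|\mathcal H|$ cosets of elements of $\mathcal H$. Therefore the number of distinct cosets of elements of $\mathcal H$ meeting $B(x,R)$ is at most $M|\mathcal H|$, giving $n\le M(R)|\mathcal H|$. This bound depends only on $r$ and $\mathcal H$, so bounded packing follows. The main subtlety—really the only nontrivial point—is justifying uniform hierarchical quasiconvexity for the translates $g_iH_i$, since hierarchical quasiconvexity is defined via the projection maps $\pi_U$ and not purely metrically; this is where the HHS-automorphism hypothesis on the regular action enters crucially.
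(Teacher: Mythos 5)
Your proof is correct and follows essentially the same route as the paper: apply Theorem~\ref{thm:hellyhqc} to a pairwise $r$--close family of cosets to obtain a point whose $R$--ball meets every coset, then bound the number of cosets using finiteness of balls in $G$ and the fact that distinct cosets of a given subgroup are disjoint. The only difference is that you spell out the uniform hierarchical quasiconvexity of the translates $g_iH_i$ and the pigeonhole count, both of which the paper's proof leaves implicit.
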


\begin{proof}
By Theorem~\ref{thm:hellyhqc}, any finite collection of cosets of elements of $\mathcal H$ that are pairwise $r$--close must all come $R$--close to a single point $x\in G$. In other words, they all intersect the $R$--ball about $x$. Since distinct cosets of a given subgroup are disjoint and balls in $G$ are finite, this bounds the size of the collection of cosets.
\end{proof}

In the case of quasiconvex subgroups of hyperbolic groups, one can use Theorem~\ref{thm:cdvhelly} in place of Theorem~\ref{thm:hellyhqc} in this argument to provide a new, simpler proof of bounded packing. This type of argument is also implicit in \cite[Remark~4.4, Corollary~4.5]{hagenpetyt:projection}, though the coarse Helly property for quasiconvex subgroups of hyperbolic groups is established in a much less efficient way there. 

Previous proofs of this result work by induction on the height of subgroups. However, this line of reasoning does not generalise outside the setting of strict negative curvature; indeed, no subgroup of a flat can ever have finite height. Moreover, Theorems~\ref{thm:cdvhelly} and~\ref{thm:hellyhqc} are purely geometric: there is no group action involved. It therefore seems that the most natural way to establish bounded packing for quasiconvex subgroups of hyperbolic groups is via the Chepoi--Dragan--Vax\`es theorem as described above.

\mk

If a group $G$ has a codimension--1 subgroup $H$, then Sageev's construction yields an action of $G$ on a CAT(0) cube complex, and if the conjugates of $H$ satisfy the coarse Helly property, then it follows that the action of $G$ on the CAT(0) cube complex is cocompact (\cite{sageev:codimension}). This raises the following question.

\begin{question}
Does the mapping class group have property $FW_\infty$, i.e. does any action of the mapping class group on a finite-dimensional CAT(0) cube complex have a fixed point?
\end{question}

Note that property $FW_\infty$ is intermediate between having no virtual surjection onto $\Z$ and Kazhdan's property (T). There are known restrictions on what an action of the mapping class group on a CAT(0) cube complex could look like. Indeed, the mapping class group of a surface of genus at least three does not admit a properly discontinuous action by semisimple isometries on a complete CAT(0) space (\cite{kapovichleeb:actions,bridsonhaefliger:metric,bridson:semisimple}), nor, more specifically, does it act properly on a CAT(0) cube complex (even an infinite dimensional one) \cite{genevois:cubical}.

More generally, in relationship with property (T) and the Haagerup property, the existence of non-trivial actions of the mapping class group on various generalisations of CAT(0) cube complexes remains mysterious; for example median spaces, Hilbert spaces, CAT(0) spaces, and $L^p$ spaces. The coarse version of the Helly property established here may prove useful in the study of such actions.

\section{Strong Shortcut Property} \label{section:shortcut}

In this section we will prove that coarsely injective spaces of uniformly bounded geometry are strongly shortcut.  Recall that a metric space has \emph{uniformly bounded geometry} if, for any $r > 0$, there exists a uniform $N(r) \in \N$ such that every ball of radius $r$ contains at most $N(r)$ points.

A \emph{Riemannian circle} $S$ is $S^1$ endowed with a geodesic metric of some length $|S|$.  A roughly geodesic metric space $(X,\sigma)$ is \emph{strongly shortcut} if there exists $K > 1$ such that for any $C > 0$ there is a bound on the lengths $|S|$ of $(K,C)$--quasi-isometric embeddings $S \to X$ of Riemannian circles $S$ in $(X,\sigma)$ \cite{Hoda:shortcut_spaces}.  A group $G$ is \emph{strongly shortcut} if it acts properly and coboundedly on a strongly shortcut metric space \cite{Hoda:shortcut_graphs, Hoda:shortcut_spaces}.

We will now give a brief description of the injective hull construction of Isbell \cite{isbell:six}, which was later rediscovered by Dress \cite{dress:trees} and Chrobak and Larmore
\cite{chrobaklarmore:generosity}.  For a nice discussion on this construction, see Lang \cite{lang:injective}.
Let $(X,\sigma)$ be a metric space.  A \emph{radius function} on $X$ is a function $f \colon X \to \R_{\ge 0}$ for which \[ \sigma(x,y) \le f(x) + f(y) \] for every $x,y \in X$.  A radius function $f \colon A \to \R_{\ge 0}$ on any subspace of $A \subseteq X$ is called a \emph{partial radius function} on $X$.  If $f,g \colon X \to \R_{\ge 0}$ are two radius functions then $f$ \emph{dominates} $g$ if $f(x) \ge g(x)$ for all $x \in X$.  A radius function $f \colon X \to \R_{\ge 0}$ is minimal if the only radius function it dominates is itself.

If $f \colon A \to \R_{\ge 0}$ is a partial radius function on $X$ then there exists a minimal radius function $g \colon X \to \R_{\ge 0}$ such that $g|_A$ is dominated by $f$.  For any $x \in X$, the function $\sigma(\cdot,x)$ is a minimal radius function.  If $f,g \colon X \to \R_{\ge 0}$ are two minimal radius functions then \[ |f-g|_{\infty} = \sup_{x \in X}\bigl|f(x) - g(x)\bigr|\] is finite.  The set of minimal radius functions on $X$, with metric given by $d_{E(X)}(f,g) = |f-g|_{\infty}$, is the \emph{injective hull} $E(X)$ of $X$.  The isometric embedding $e \colon X \hookrightarrow E(X)$ sends $x \in X$ to the minimal radius function $e(x) \colon y \mapsto \sigma(x,y)$ and for any $x \in X$ and $f \in E(X)$ we have $d_{E(X)}\bigl(e(x),f\bigr) = f(x)$.

\begin{lem}
  \label{lem:dom_distance} Let $(X,\sigma)$ be a metric space.
  Let $g \colon X \to \R_{\ge 0}$ be a minimal radius function,
  let $\bar f \colon X \to \R_{\ge 0}$ be a radius function and
  let $f \colon X \to \R_{\ge 0}$ be any minimal radius function
  dominated by $\bar f$.  Then
  $|g - f|_{\infty} \le |g - \bar f|_{\infty}$.
\end{lem}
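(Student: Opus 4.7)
The plan is to set $D = |g - \bar f|_{\infty}$ and show the two one-sided bounds $f(x) - g(x) \le D$ and $g(x) - f(x) \le D$ separately for every $x \in X$. The two directions have very different flavours: one is an immediate consequence of domination, while the other uses the minimality of $g$ together with the characterisation of minimal radius functions as suprema.

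First I would dispense with $f(x) - g(x) \le D$ directly. The inequality $|g - \bar f|_{\infty} \le D$ gives $\bar f(y) \le g(y) + D$ for all $y$, and since $\bar f$ dominates $f$ we also have $f(y) \le \bar f(y)$. Composing these two inequalities gives $f(y) \le g(y) + D$ for every $y$, which is the claim. Note that neither minimality of $f$ nor minimality of $g$ is used here.

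For the reverse direction $g(x) - f(x) \le D$, I would invoke the standard characterisation of minimal radius functions: a radius function $h \colon X \to \R_{\ge 0}$ is minimal if and only if $h(x) = \sup_{y \in X}\bigl(\sigma(x,y) - h(y)\bigr)$ for every $x \in X$. (This is because the smallest value one can assign at $x$ while preserving the radius function inequality $\sigma(x,y) \le h(x) + h(y)$ for all $y$ is precisely that supremum.) Applying this to $g$, for any $\epsilon > 0$ one can pick $y \in X$ with $g(x) \le \sigma(x,y) - g(y) + \epsilon$. Since $f$ is itself a radius function, $\sigma(x,y) \le f(x) + f(y)$, so
\[ g(x) - f(x) \;\le\; f(y) - g(y) + \epsilon \;\le\; \bar f(y) - g(y) + \epsilon \;\le\; D + \epsilon, \]
using $f \le \bar f$ in the middle step. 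Letting $\epsilon \to 0$ completes this direction.

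The only substantive issue is recalling (or reproving in one line) the sup-characterisation of minimality of $g$; everything else is symbol-pushing with the defining inequality of radius functions and the hypothesis $f \le \bar f$. Noticing the asymmetry — that the domination hypothesis does the work on one side while minimality of $g$ does the work on the other — is what makes the two-line case split painless.
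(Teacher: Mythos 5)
Your proof is correct and essentially identical to the paper's: the easy bound follows from $f \le \bar f \le g + |g-\bar f|_\infty$, and the other bound uses minimality of $g$ to produce, for each $\epsilon>0$, a point $y$ with $g(x)+g(y) < \sigma(x,y)+\epsilon$, then applies the radius-function inequality for $f$ and domination by $\bar f$. The only cosmetic difference is that you package the minimality of $g$ as the sup-characterisation $g(x)=\sup_y\bigl(\sigma(x,y)-g(y)\bigr)$, whereas the paper invokes the near-tight witness directly; the underlying argument is the same.
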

\begin{proof}
  Let $y \in X$.  Then
  $f(y) \le \bar f(y) \le g(y) + |g - \bar f|_{\infty}$ and so
  $f(y) - g(y) \le |g - \bar f|_{\infty}$.  It remains to prove that
  $g(y) - f(y) \le |g - \bar f|_{\infty}$.  By minimality of $g$, for
  any $\epsilon > 0$, there exists $z \in X$ for which
  $g(y) + g(z) < \sigma(y,z) + \epsilon$.  Then, since $f$ is a radius
  function dominated by $\bar f$, we have
  \begin{align*}
    f(y) &\ge \sigma(y,z) - f(z) \\
    &\ge \sigma(y,z) - \bar f(z) \\
    &\ge \sigma(y,z) - g(z) - |g - \bar f|_{\infty} \\
    &> g(y) - \epsilon - |g - \bar f|_{\infty}
  \end{align*}
  and so $g(y) - f(y) < |g - \bar f|_{\infty} + \epsilon$ which
  completes the proof since we chose $\epsilon > 0$ arbitrarily.
\end{proof}

\begin{thm} \label{thm:coarse_helly_implies_shortcut}
  Let $(X,\sigma)$ be a coarsely injective metric space.  If
  $(X,\sigma)$ has uniformly bounded geometry then
  $(X,\sigma)$ is strongly shortcut.
\end{thm}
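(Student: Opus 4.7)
The approach is to work in the injective hull $E(X)$, into which $X$ embeds isometrically as a coarsely dense subset by Proposition~\ref{prop:dense_in_hull}, and to combine the Helly property for balls in $E(X)$ with the discreteness coming from uniformly bounded geometry to bound the length of any quasi-isometrically embedded Riemannian circle. I would argue by contradiction: assume a $(K,C)$--quasi-isometric embedding $\gamma\colon S\to X$ of a Riemannian circle of length $\ell = |S|$ with $\ell$ arbitrarily large, where $K>1$ is to be chosen in terms of the coarse-injectivity constant $\delta$ and the growth function $N$ of $X$, and derive a contradiction once $\ell$ exceeds an explicit bound depending only on $K$ and $C$.

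Step 1 (Coarse circumcenter). Composing with the isometric embedding $X\hookrightarrow E(X)$ keeps $\gamma$ a $(K,C)$--quasi-isometric embedding. Because every pair $\gamma(s),\gamma(t)$ satisfies $d_{E(X)}(\gamma(s),\gamma(t))\le K d_S(s,t)+C\le K\ell/2+C$, the closed balls $B_{E(X)}(\gamma(t),K\ell/4+C/2)$ pairwise intersect. Injectivity of $E(X)$ therefore produces a point $p\in E(X)$ with $d_{E(X)}(p,\gamma(t))\le K\ell/4+C/2$ for every $t\in S$. Let $D$ be the coarse-density constant of $X$ in $E(X)$ and pick $q\in X$ with $d_{E(X)}(p,q)\le D$, so that every $\gamma(t)$ lies in the ball $B_X(q,R)$ with $R = K\ell/4+C/2+D$.

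Step 2 (Distinct image points from the quasi-isometric bound). If $\gamma(s)=\gamma(t)$ then $d_S(s,t)\le K(0+C)=KC$, so the $\gamma$-fibres have $d_S$-diameter at most $KC$. In particular one can sample points $t_1,\dots,t_n\in S$ with the images $\gamma(t_i)$ all distinct and with $n\ge\lceil\ell/(KC+1)\rceil$.

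Step 3 (Refining the radius via Lemma~\ref{lem:dom_distance}). The naive bound from Step 1 is linear in $\ell$, and uniformly bounded geometry combined with it would only yield a contradiction for $N$ growing sublinearly. To bypass this I would exploit the redundancy of the cyclic domain: each point $\gamma(t)$ is approached by two complementary arcs, and the conical bicombing on $E(X)$ lets one assemble a partial radius function $\bar f$ on the finite set $\{\gamma(t_i)\}\subset X$ taking a value $R_0$ independent of $\ell$ (depending only on $K$, $C$, and the constants of $(X,\sigma)$). Extending $\bar f$ to a radius function on all of $X$ in the standard way and letting $f$ be its minimisation, one obtains $f\in E(X)$; applying Lemma~\ref{lem:dom_distance} with $g=e(\gamma(t_i))$ yields $d_{E(X)}(f,\gamma(t_i))\le R_0$ for every $i$. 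Coarse density then provides $q'\in X$ with every $\gamma(t_i)\in B_X(q',R_0+D)$.

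Step 4 (Contradiction via uniformly bounded geometry). By UBG we have $|B_X(q',R_0+D)|\le N(R_0+D)$. Combined with Step~2 this gives $\ell/(KC+1)\le n\le N(R_0+D)$, which bounds $\ell$ in terms of $K$, $C$, and the constants of $X$.

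The main obstacle is Step 3: the Helly property alone produces only an $\ell$-dependent radius, and a simple argument shows that for a general bounded set one cannot do better than the Helly bound. The circle hypothesis is essential, and the delicate point is constructing a partial radius function whose minimisation yields an $\ell$-independent centre. I expect this to require choosing $K$ carefully in terms of the (necessarily finite) combinatorial dimension of $E(X)$, which follows from $(X,\sigma)$ being coarsely injective of uniformly bounded geometry, and to use the conical bicombing on $E(X)$ to exploit the antipodal symmetry of $S$ that distinguishes a circle from a generic bounded configuration.
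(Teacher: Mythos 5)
There is a genuine gap, and it lies exactly where you flag it: Step~3 is not merely unproven, its goal is impossible. Any radius function $\bar f$ on the sampled circle points must satisfy $\bar f(\gamma(t_i))+\bar f(\gamma(t_j))\ge\sigma(\gamma(t_i),\gamma(t_j))$, and antipodal parameters $s,\bar s\in S$ give $\sigma(\gamma(s),\gamma(\bar s))\ge \frac{|S|}{2K}-C$; hence $\max_i\bar f(\gamma(t_i))\ge\frac{|S|}{4K}-\frac{C}{2}$, which grows linearly in $\ell$. Equivalently, since the image of $\gamma$ has diameter at least $\frac{\ell}{2K}-C$, no point of $E(X)$ (or of $X$) can be within an $\ell$--independent distance $R_0+D$ of \emph{all} the points $\gamma(t_i)$; so the ball in Step~4 cannot exist, and no choice of $K$, no bicombing argument, and no appeal to finite combinatorial dimension of $E(X)$ can rescue a strategy whose aim is to trap the circle itself in a uniformly small ball. (As a side remark, the claim that uniformly bounded geometry forces $E(X)$ to have finite combinatorial dimension is also not justified, but it is moot given the obstruction above.)

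The missing idea, which is how the paper's proof proceeds, is to trap \emph{auxiliary} points rather than the circle points. One takes the circumcenter-type minimal radius function $f\in E(X)$ obtained by minimising the constant function $K\frac{|S|}{4}+C$ on $\phi(S)$ (your Step~1 object, viewed as a function), and measures the tightness quantity $\ell_{x,y}=f(x)+f(y)-\sigma(x,y)$; minimality of $f$ on $\phi(S)$ forces, for each circle point $x$, some circle point $y$ with $\ell_{x,y}$ small, and such $y$ is necessarily nearly antipodal to $x$. Then, for each circle point $x$, one builds a short ``geodesic'' of minimal radius functions $f=f_x^0,f_x^1,\dots$ stepping distance $\delta$ at a time from $f$ toward $x$ (using Lemma~\ref{lem:dom_distance} to control the step size), with the key estimate $f(y)+k\delta-\ell_{x,y}\le f_x^k(y)\le f(y)+\max\{0,k\delta-\ell_{x,y}\}$. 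This estimate shows that for two circle points $a,a'$ that are suitably separated along $S$ (but not antipodal), the third steps $f_{\phi(a)}^3$ and $f_{\phi(a')}^3$ differ by more than $2\delta$, so their retractions to $X$ are \emph{distinct} points of the fixed ball $B(r(f),5\delta)$. Subdividing $S$ evenly then yields roughly $\bigl(\frac{2(K^2-1)}{K^2}+\frac{4\delta+10C}{K|S|}\bigr)^{-1}$ distinct points in a ball of the $\ell$--independent radius $5\delta$, and letting $|S|\to\infty$ and $K\to1$ contradicts uniformly bounded geometry. Your Steps~1 and~2 are fine as far as they go, but the counting must be done against a ball of radius depending only on $\delta$, populated by these perturbed circumcenters, not by the circle points themselves.
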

\begin{proof}
  In order to prove this theorem, we will show that for some uniform radius $r$, a $(K,C)$-quasi-isometric embedding of a Riemannian circle $S \to X$ implies the existence of a ``center'' point $x$ such that the cardinality of the ball $B(x,r)$ is bounded below by an expression that tends to infinity as $K$ approaches $1$ and $|S|$ approaches infinity.  If $X$ is not strongly shortcut then, for any $K > 1$, it will admit $(K,C_K)$--quasi-isometric embeddings of arbitrarily long Riemannian circles so that we would then contradict the uniformly bounded geometry assumption.
  
  Let $X \to E(X)$ be the embedding of $(X,\sigma)$
  into its injective hull and view this embedding as an inclusion of a
  subspace.  By Proposition~\ref{prop:dense_in_hull}, the subspace $X$ is $\delta$--coarsely dense in $E(X)$ for some $\delta > 0$.  So
  there is a retraction $r \colon E(X) \to X$ such that
  $r$ is a $(1,2\delta)$--quasi-isometry.
  
  Let $\phi \colon S \to X$ be a $(K,C)$--quasi-isometric
  embedding of a Riemannian circle.  Let
  $f'' \colon \phi(S) \to \R_{\ge 0}$ be the constant function taking
  the value $K\frac{|S|}{4}+C$.  Then $f''$ is a radius function on $\phi(S) \subset X$.  Let
  $f' \colon \phi(S) \to \R_{\ge 0}$ be a minimal
    radius function on $\phi(S)$ dominated by $f''$.  Then for each
  $x \in \phi(S)$ and each $\epsilon > 0$, there exists a
  $y \in \phi(S)$ for which $f'(x) + f'(y) < \sigma(x,y) + \epsilon$.
  Since $f'$ is a partial radius function on
  $X$ we can let $f \colon X \to \R_{\ge 0}$ be a
  minimal radius function on $X$ dominated
    by $f'$.  Then $f$ is a point of $E(X)$, by definition of $E(X)$.  Moreover, if $x \in \phi(S)$ then $d_{E(X)}\bigl(f,x) = f(x) \le f'(x) \le f''(x) = K\frac{|S|}{4}+C$ so if $\bar s$ is the antipode in $S$ of any element of $s \in \phi^{-1}(x)$ then we have
    \[ d_{E(X)}\bigl(x, \phi(\bar s)\bigr)
    = d_{E(X)}\bigl(\phi(s), \phi(\bar s)\bigr) \ge \frac{1}{K} d_S(s, \bar s) - C = \frac{|S|}{2K} - C \]
    and
    \[ d_{E(X)}\bigl(x, \phi(\bar s)\bigr) \le d_{E(X)}\bigl(x,f\bigr) + d_{E(X)}\bigl(f,\phi(\bar s)\bigr)
    = f(x) + f\bigl(\phi(\bar s)\bigr) \le f(x) + K\frac{|S|}{4} + C \]
    so that $f(x) \ge \frac{|S|}{2K} - K\frac{|S|}{4} - 2C = \frac{2-K^2}{4K}|S| - 2C$.
    Thus we have shown that
  \[ \frac{2-K^2}{4K}|S| - 2C \le f(x) \le K\frac{|S|}{4}+C \]
  for any $x \in \phi(S)$.

  For $x,y \in X$ let $\ell_{x,y} = f(x) + f(y) - \sigma(x,y)$.
  Since $f$ is dominated by $f'$ and $f'$ is a minimal radius function on $\phi(S)$,
  for each $x \in \phi(S)$ and each $\epsilon > 0$, there exists
  $y \in \phi(S)$ such that $\ell_{x,y} < \epsilon$.  Moreover, for
  $a,b \in S$ we have
  \begin{align*}
    \frac{2-K^2}{2K}|S| - 4C
    &\le f\bigl(\phi(a)\bigr) + f\bigl(\phi(b)\bigr) \\
    & = \sigma\bigl(\phi(a),\phi(b)\bigr) + \ell_{\phi(a),\phi(b)} \\
    & \le Kd_S(a,b) + C + \ell_{\phi(a),\phi(b)}
  \end{align*}
  and so
  $d_S(a,b) \ge \frac{2-K^2}{2K^2}|S| - \frac{\ell_{\phi(a),\phi(b)} +
    5C}{K}$.

\mk

  {\bf Claim:}
  Let $x \in \phi(S)$.  There exists a sequence of minimal radius
  functions $(f_x^k \colon X \to \R_{\ge 0} )_k$ where $k$
  ranges in $\{0,1,\ldots,M_x\}$ such that 
  $M_x = \bigl\lfloor\frac{f(x)}{\delta}\bigr\rfloor$ and the
  following properties hold for all $k$, $k'$ and $y$.
  \begin{enumerate}
  \item \label{itm:init} $f_x^0 = f$
  \item \label{itm:isom} $d_{E(X)}(f_x^k,f_x^{k'}) = \delta|k - k'|$
  \item \label{itm:ineq}
    $f(y) + k \delta - \ell_{x,y} \le f_x^k(y) \le f(y) +
    \max\{0,k\delta - \ell_{x,y}\}$
  \end{enumerate}
  \mk

  {\bf Proof of Claim:}
  We construct the $(f_x^k)_k$ by induction on $k$.  By Property~(\ref{itm:init}), we must start with $f_x^0 = f$.  Assuming we have
  $f_x^{k-1}$, we will begin by defining a radius function
  $\bar f_x^k$.  Set $\bar f_x^k(x) = f_x^{k-1}(x) - \delta$.  By
  minimality of $f_x^{k-1}$, there exists $y \in X$ for which the inequality
  \begin{equation}\label{eqn:tight} f_x^{k-1}(y) + f_x^{k-1}(x) - \delta < \sigma(x,y)\end{equation} holds.  Indeed, if no such $y$ existed then \[ y \mapsto
  \begin{cases}f_x^{k-1}(y) & \text{if $y \neq x$} \\
  f_x^{k-1}(y) - \frac{\delta}{2} & \text{if $y = x$}\end{cases}
   \] would be a radius function that is dominated by but not equal to $f_x^{k-1}$ and this would contradict minimality of $f_x^{k-1}$.  Set $\bar f_x^k(x) = f_x^{k-1}(x) - \delta$.  For all
  $y \in X\setminus \{x\}$ satisfying Equation~(\ref{eqn:tight}), set
  $\bar f_x^k(y) = \sigma(x,y) - f_x^{k-1}(x) + \delta$.  For all
  other $y \in X\setminus \{x\}$, set $\bar f_x^k(y) = f_x^{k-1}(y)$.  Then, except for at $y=x$, we have $\bar f_x^k(y) \ge f_x^{k-1}(y)$.  Thus, to check that $\bar f_x^k$ is a radius function, we need only verify that $\bar f_x^k(x) + \bar f_x^k(y) \ge \sigma(x,y)$ for any $y \in X$.  When $y = x$ the inequality $\bar f_x^k(x) + \bar f_x^k(y) \ge \sigma(x,y)$ is equivalent to $f_x^{k-1}(x) \ge \delta$, which holds by the inductive application of Property~(\ref{itm:isom}) and the triangle inequality.  When $y$ satisfies Equation~(\ref{eqn:tight}) the inequality $\bar f_x^k(x) + \bar f_x^k(y) \ge \sigma(x,y)$ is equivalent to $f_x^{k-1}(x) - \delta + \sigma(x,y) - f_x^{k-1}(x) + \delta \ge \sigma(x,y)$, which holds with equality.  Finally, when $y$ does not satisfy Equation~(\ref{eqn:tight}), we have $\bar f_x^k(x) + \bar f_x^k(y) = f_x^{k-1}(x) - \delta + f_x^{k-1}(y) \ge f_x^{k-1}(x) - \delta + \sigma(x,y) - f_x^{k-1}(x) + \delta = \sigma(x,y)$.
  Thus $\bar f_x^k$ is a radius function.  Define $f_x^k$ as any minimal
  radius function that is dominated by $\bar f_x^k$.

  Since
  $\bar f_x^k(y) = \sigma(x,y) - f_x^{k-1}(x) + \delta = \sigma(x,y) -
  \bar f_x^k(x)$
  for some $y \in X$, we must have
  $f_x^k(x) = \bar f_x^k(x) = f_x^{k-1}(x) - \delta$.  Thus
  $|f_x^{k-1} - f_x^k|_{\infty} \ge \delta$ and
  \[ d_{E(X)}(f_x^{M_x}, x) = f_x^{M_x}(x) = f(x) - M_x\delta =
  f(x) - \Bigl\lfloor\frac{f(x)}{\delta}\Bigr\rfloor\delta < \delta \]
  so we have $d_{E(X)}(f_x^{M_x}, x) < \delta$. On the other hand, by
  Lemma~\ref{lem:dom_distance}, we have
  $|f_x^{k-1} - f_x^k|_{\infty} \le |f_x^{k-1} - \bar f_x^k|_{\infty}
  \le \delta$
  and so
  $d_{E(X)}(f_x^{k-1}, f_x^k) = |f_x^{k-1} - f_x^k|_{\infty} =
  \delta$.  Therefore,
  \begin{align*}
    d_{E(X)}(f, x) 
    & = f(x) \\
    & = M_x\delta + f(x) - M_x\delta \\
    &= M_x\delta + d_{E(X)}(f_x^{M_x}, x) \\
    &= \sum_{k=1}^{M_x} d_{E(X)}(f_x^{k-1}, f_x^k) +
      d_{E(X)}(f_x^{M_x}, x)
  \end{align*}
  where $f_x^0 = f$.  Then, by the triangle inequality, Property~(\ref{itm:isom}) is satisfied.

  To verify Property~(\ref{itm:ineq}), let $y \in X$.  We have
  \[ f(y) + k \delta - \ell_{x,y} = \sigma(x,y) + k \delta - f(x) =
    \sigma(x,y) - f_x^k(x) \le f_x^k(y) \] so the lower bound holds.
  The upper bound on $f_x^k(y)$ given by Property~(\ref{itm:ineq}) is
  $R_k = f(y) + \max\{0,k\delta - \ell_{x,y}\}$.  Suppose Property~(\ref{itm:ineq}) doesn't hold and let $k$ be the least integer for
  which $f_x^k(y) > R_k$.  Then $k > 0$ and $k$ must satisfy
  $f_x^k(y) - f_x^{k-1}(y) > R_k - R_{k-1} \ge 0$.  By the
  construction of $f_x^k$, the fact that $f_x^k(y) > f_x^{k-1}(y)$
  implies that $f_x^{k-1}(y) + f_x^{k-1}(x) - \delta < \sigma(x,y)$
  and that
  $\bar f_x^k(y) = \sigma(x,y) - f_x^{k-1}(x) + \delta = \sigma(x,y) -
  \bar f_x^k(x)$.  Then we must have
  \begin{align*}
    f_x^k(y)
    &= \bar f_x^k(y) \\
    &= \sigma(x,y) - \bar f_x^k(x) \\
    &= \sigma(x,y) - f_x^k(x) \\
    &= f(y) + k \delta - \ell_{x,y} \\
    &\le R_k
  \end{align*}
  which contradicts $f_x^k(y) > R_k$.  Thus we have verified Property~(\ref{itm:ineq}).
  $\hfill\diamondsuit$

  We will now use the sequence $(f_x^k)_k$ of minimal radius functions
  to prove the theorem.  Assume that $a, a' \in S$ satisfy
  $d_S(a,a') \ge \frac{2(K^2-1)}{K^2}|S| + \frac{4\delta + 10C}{K}$.  Such $a$ and $a'$ exist when $K$ is close enough to $1$.  Take $b \in S$ for which $\ell_{\phi(a),\phi(b)} < \delta$.
  Then $d_S(a,a') + d_S(a',b) + d_S(b,a) \le |S|$ so we have
  \begin{align*}
    d_S(a',b)
    &\le |S| - d_S(a,b) - d_S(a,a') \\
    &\le |S| - \frac{2-K^2}{2K^2}|S| + \frac{\ell_{\phi(a),\phi(b)} +
      5C}{K} - d_S(a,a') \\
    &< |S| - \frac{2-K^2}{2K^2}|S| + \frac{5C + \delta}{K} - d_S(a,a') \\
    &\le |S| - \frac{2-K^2}{2K^2}|S| + \frac{5C + \delta}{K} -
      \frac{2(K^2-1)}{K^2}|S| - \frac{4\delta + 10C}{K} \\
    &= \frac{2-K^2}{2K^2}|S| - \frac{3\delta + 5C}{K}
  \end{align*}
  and so
  \[ \frac{2-K^2}{2K^2}|S| - \frac{\ell_{\phi(a'),\phi(b)} + 5C}{K}
    \le d_S(a',b) < \frac{2-K^2}{2K^2}|S| - \frac{3\delta + 5C}{K} \]
  which implies $\ell_{\phi(a'),\phi(b)} > 3\delta$.  So we
  have
  \begin{align*}
    f_{\phi(a')}^3\bigl(\phi(b)\bigr)
    &\le f\bigl(\phi(b)\bigr) +
      \max\bigl\{0, 3\delta - \ell_{\phi(a'),\phi(b)}\bigr\} \\
    &= f\bigl(\phi(b)\bigr) \\
    &\le f_{\phi(a)}^3\bigl(\phi(b)\bigr) - 3\delta + \ell_{\phi(a),\phi(b)} \\
    &< f_{\phi(a)}^3\bigl(\phi(b)\bigr) - 2\delta
  \end{align*}
  where the inequalities are applications of property
  (\ref{itm:ineq}).  Thus
  \[ d_{E(X)}(f_{\phi(a')}^3, f_{\phi(a)}^3) > 2\delta \] and so
  $r(f_{\phi(a')}^3)$ and $r(f_{\phi(a)}^3)$ are distinct elements of
  the metric ball $B\bigl(r(f),5\delta\bigr)$ of radius $5\delta$
  centered at $r(f)$ in $X$.  So, if $\{a_i\}_{i=1}^N$ are
  points of $S$ that subdivide $S$ into segments of length at least
  $\frac{2(K^2-1)}{K^2}|S| + \frac{4\delta + 10C}{K}$ then
  $B\bigl(r(f), 5\delta\bigr)$ contains at least $N$ points.
  Subdividing $S$ evenly we can achieve
  $N = \Bigl\lfloor \Bigl(\frac{2(K^2-1)}{K^2} + \frac{4\delta +
    10C}{K|S|} \Bigr)^{-1} \Bigr\rfloor$.  So we have shown that if
  $X$ admits a $(K,C)$--quasi-isometric embedding of a Riemannian
  circle $S$ and $K$ is close enough to $1$ then for some $x \in X$ we have
  $\bigl|B(x,5\delta)\bigr| \ge \Bigl\lfloor
  \Bigl(\frac{2(K^2-1)}{K^2} + \frac{4\delta + 10C}{K|S|} \Bigr)^{-1}
  \Bigr\rfloor$.
  
  To complete the proof, suppose $X$ is not strongly shortcut.
  Then, for each $K > 1$, there exists $C_K > 0$ and a sequence
  $(\phi_n \colon S_n \to X)_n$ of $(K, C_K)$--quasi-isometric
  embeddings of Riemannian circles where $|S_n| \ge n$.
  The argument above shows that, for each small enough $K > 1$ and each $n \in \N$ there
  exists $x_{K,n} \in X$ satisfying
  $\bigl|B(x_{K,n}, 5\delta)\bigr| \ge \Bigl\lfloor
  \Bigl(\frac{2(K^2-1)}{K^2} + \frac{4\delta + 10C_k}{K|S_n|}
  \Bigr)^{-1} \Bigr\rfloor$.  The expression
  $\Bigl(\frac{2(K^2-1)}{K^2} + \frac{4\delta + 10C_K}{K|S_n|}
  \Bigr)^{-1}$ tends to $\frac{K^2}{2(K^2-1)}$ as $n$ tends to
  infinity so if $n_K \in \N$ is large enough then
  $\bigl|B(x_{K,n_K},5\delta)\bigr| \ge \frac{K^2}{2(K^2-1)} - 1$.
  But this contradicts the uniform bounded geometry assumption on
  $X$ since $\frac{K^2}{2(K^2-1)}$ tends to infinity as $K$ tends
  to $1$.
\end{proof}

\small
\bibliography{bibli}
\bibliographystyle{alpha}

\end{document}